\newtheorem{thm}{Theorem}[section]
\newtheorem{lemma}[thm]{Lemma}
\newtheorem{prop}[thm]{Proposition}
\newtheorem{defn}[thm]{Definition}
\theoremstyle{remark}
\newtheorem{ex}[thm]{Example}
\newtheorem{rmk}[thm]{Remark}
\newcommand{\N}{\mathbb{N}}
\newcommand{\Z}{\mathbb{Z}}
\newcommand{\Q}{\mathbb{Q}}
\newcommand{\C}{\mathbb{C}}
\newcommand{\F}{\mathbb{F}}
\newcommand{\ft}{\mathfrak{t}}
\newcommand{\CFK}{{\rm CFK}^-}
\newcommand{\rCFK}{\underline{{\rm CFK}}^-}
\newcommand{\HFK}{{\rm HFK}^-}
\newcommand{\fs}{\mathfrak{s}}
\newcommand{\spinc}{spin$^c$~}
\newcommand{\del}{\partial}
\newcommand{\conc}{\mathcal{C}}
\newcommand{\Yt}{(Y,\ft)}
\newcommand{\HFp}{{\rm HF}^-}
\newcommand{\HFpred}{{\rm HF}^-_{\rm red}}
\title{A note on cobordisms of algebraic knots}
\author{J\'ozsef Bodn\'ar, Daniele Celoria, Marco Golla}
\date{}
\address{Alfr\'ed R\'enyi Institute of Mathematics, Budapest, Hungary}
\email{bodnar.jozef@renyi.mta.hu}
\address{Department di Mathematics, Universit\`a di Firenze, Firenze, Italy}
\email{celoria@mail.dm.unipi.it}
\address{Department of Mathematics, Uppsala University, Uppsala, Sweden}
\email{marco.golla@math.uu.se}
\begin{document}

\begin{abstract}
In this note we use Heegaard Floer homology to study smooth cobordisms of algebraic knots and complex deformations of cusp singularities of curves.
The main tool will be the concordance invariant $\nu^+$: 
we study its behaviour with respect to connected sums, providing an explicit formula in the case of L-space knots and proving subadditivity in general.
\end{abstract}

\maketitle


\section{Introduction}

A \emph{cobordism} between two knots $K$, $K'$ in $S^3$ is a smoothly and properly embedded surface $F\subset S^3\times [0,1]$, with $\del F = K\times\{0\} \cup K'\times\{1\}$.
Carving along an arc connecting the two boundary components of $F$, one produces a slice surface for the connected sum $\overline K\#K'$, where $\overline K$ is the mirror of $K$.
Two knots are \emph{concordant} if there is a genus-0 cobordism between them; this is an equivalence relation, and the connected sum endows the quotient $\conc$ of the set of knots with a group operation; $\conc$ is therefore called \emph{concordance group}. A knot is \emph{smoothly slice} if it is concordant to the unknot.

Litherland \cite{Litherland} used Tristram--Levine signatures to show that torus knots are linearly independent in $\conc$. In fact, Tristram--Levine signatures provide a lower bound for the slice genus of knots.
Sharp lower bounds for the slice genus of torus knots are provided by the invariants $\tau$ in Heegaard Floer homology \cite{OzsvathSzabo-tau}, and $s$ in Khovanov homology \cite{Rasmussen-s}.

More recently, Ozsv\'ath, Stipsicz, and Szab\'o \cite{OzsvathSzaboStipsicz} defined the concordance invariant $\Upsilon$; Livingston and Van Cott \cite{LivingstonVanCott} used $\Upsilon$ to improve on the bounds given by signatures along some families of connected sums of torus knots.

In this note we consider \emph{algebraic knots}, i.e. links of irreducible curve singularities (\emph{cusps}), and more generally \emph{L-space knots}. Given two algebraic knots $K, L$, we give lower bounds on the genus of a cobordism between them by using the concordance invariant $\nu^+$ defined by Hom and Wu \cite{HomWu}. This is computed in terms of the semigroups of the two corresponding singularities, $\Gamma_K$ and $ \Gamma_L$, and the corresponding enumerating functions $\Gamma_K(\cdot)$ and $\Gamma_L(\cdot)$.

\begin{thm}\label{t:maincomputation}
If $K$ and $L$ are algebraic knots with enumerating functions $\Gamma_K(\cdot)$ and $\Gamma_L(\cdot)$ respectively, then:
\[
\nu^+(K\#\overline{L}) = \max\left\{g(K)-g(L) + \max_{n\ge 0}\{\Gamma_L(n) - \Gamma_K(n)\},0\right\}.
\]
\end{thm}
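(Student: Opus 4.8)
The plan is to reduce the computation to the behaviour of the local $h$-invariants $V_s$ under connected sum, and then to translate everything into the combinatorics of the two semigroups. Recall that for any knot $J$ one has $\nu^+(J)=\min\{s\ge 0 : V_s(J)=0\}$, where the $V_s\ge 0$ are read off from the action of $U$ on the homology of the large-surgery subquotient complexes $A_s$ of the knot Floer complex $\CFK(J)$. Since $K$ and $L$ are L-space knots, $\CFK(K)$ and $\CFK(L)$ are staircase complexes determined by $\Gamma_K$ and $\Gamma_L$; in particular one has the semigroup formula $V_s(K)=\#\big(\Gamma_K\cap[0,g(K)-s)\big)$ for $0\le s\le g(K)$, and similarly for $L$.

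First I would establish the key Floer-theoretic input: a formula for $V_s$ of $K\#\overline L$ purely in terms of $V_\bullet(K)$ and $V_\bullet(L)$. The connected sum corresponds to the tensor product $\CFK(K)\otimes\CFK(\overline L)$, and $\CFK(\overline L)$ is the dual of the staircase $\CFK(L)$. Computing the $U$-action on the homology of the subquotients of this tensor product — equivalently, locating the lowest-graded surviving generator — should yield
\[
V_s(K\#\overline L)=\max_{t\ge 0}\{V_{s+t}(K)-V_t(L)\}.
\]
I expect this step to be the main obstacle: it requires understanding the $V$-invariants of a tensor product of a staircase with a dual staircase, which is the genuinely Floer-theoretic content. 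As consistency checks one verifies that the formula gives $V_0(K\#\overline K)=0$ (reflecting that $K\#\overline K$ is slice) and $V_s(\overline L)=0$ for all $s\ge 0$ (taking $K=\unknot$).

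Granting this, the remaining work is bookkeeping with the two semigroups. By the previous display, $\nu^+(K\#\overline L)$ is the smallest $s\ge 0$ with $V_{s+t}(K)\le V_t(L)$ for all $t\ge 0$, floored at $0$ since $\nu^+\ge 0$. Substituting the semigroup formula and writing $A(x)=\#(\Gamma_K\cap[0,x))$, $B(x)=\#(\Gamma_L\cap[0,x))$, the condition ``$V_{s+t}(K)\le V_t(L)$ for all $t$'' becomes ``$A(m+c)\le B(m)$ for all $m$'', where $c=g(K)-g(L)-s$. Here I would use that the counting function $A$ and the enumerating function $\Gamma_K(\cdot)$ are generalized inverses, so that $A(x)\le n$ if and only if $x\le\Gamma_K(n)$. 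This turns the inequality into $c\le\Gamma_K(B(m))-m$ for all $m$, and minimising the right-hand side over each level set $\{m:B(m)=n\}$ — the minimum being attained at $m=\Gamma_L(n)$ — gives the optimal value $c=\min_n\{\Gamma_K(n)-\Gamma_L(n)\}$.

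Finally, unwinding $s=g(K)-g(L)-c$ with the optimal value $c=\min_n\{\Gamma_K(n)-\Gamma_L(n)\}=-\max_n\{\Gamma_L(n)-\Gamma_K(n)\}$ yields
\[
\nu^+(K\#\overline L)=\max\Big\{g(K)-g(L)+\max_{n\ge 0}\{\Gamma_L(n)-\Gamma_K(n)\},\,0\Big\},
\]
as claimed, the outer maximum with $0$ recording the constraint $\nu^+\ge 0$.
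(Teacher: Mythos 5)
Your proposal defers the entire Floer-theoretic content of the theorem to the displayed formula $V_s(K\#\overline L)=\max_{t\ge 0}\{V_{s+t}(K)-V_t(L)\}$, and this is precisely where the genuine gap lies: you state it, acknowledge it as ``the main obstacle'', and never prove it. (The other input, $V_s(K)=\#\bigl(\Gamma_K\cap[0,g(K)-s)\bigr)$ for an L-space knot, is indeed standard.) The connected-sum formula is not an available result you can quote; establishing it requires exactly the analysis the paper performs --- tensoring Krcatovich's reduced complex $\rCFK(K)$ (a copy of $\F[U]$ whose Alexander filtration encodes $\Gamma_K$) with the dual staircase $\CFK(\overline L)$, identifying the cycle generating the tower, and reading off its top Alexander grading. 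The paper's own proof (Proposition 3.2, then Theorem 3.1) never needs a closed formula for the $V_s$ of the connected sum in terms of $V_\bullet(K)$ and $V_\bullet(L)$; such a formula is, if anything, a \emph{corollary} of the Remark following the proof of Theorem 3.1, not an input. So as written the proposal is a reduction of the theorem to an unproven Floer-theoretic claim, plus combinatorics.

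Even granting the formula, the combinatorial step contains an unjustified leap. Since $V_t(L)=B(g(L)-t)$ with $t\ge 0$, the condition ``$V_{s+t}(K)\le V_t(L)$ for all $t\ge 0$'' translates into $A(m+c)\le B(m)$ only for $m\le g(L)$, not ``for all $m$'' as you assert; yet your level-set minimization evaluates at $m=\Gamma_L(n)$, and $\Gamma_L(n)>g(L)$ for every $n>g(L)/2$ (initial segments of a numerical semigroup below the conductor never contain more elements than gaps, so $\Gamma_L(n)\ge 2n$ there). A priori the maximum of $\Gamma_L(n)-\Gamma_K(n)$ over all $n\ge 0$ --- which is what the theorem requires --- could be attained only at indices invisible to your condition. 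Closing this gap means proving that whenever this maximum exceeds $g(L)-g(K)$ it is already attained at some $n$ with $\Gamma_L(n)\le g(L)$; this is a nontrivial property of semigroups of singularities (symmetry of the semigroup plus interval-counting arguments), which you neither flag nor prove. Note the two gaps are linked: if this combinatorial statement failed for some pair of algebraic knots, your claimed $V$-formula would itself be false, because the paper's (correct) answer maximizes over all $n\ge 0$. Either way, both halves of the argument need substantial work before this constitutes a proof.
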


In \cref{ss:reducedHFK} we define an appropriate enumerating function for L-space knots;  \cref{t:maincompL} below mimics the statement above in this more general setting, and directly implies \cref{t:maincomputation}; the key of the definition and of the proofs is the reduced Floer complex defined by Krcatovich \cite{Krcat}.



As an application of \cref{t:maincomputation}, we give a different proof of a result of Gorsky and N\'emethi \cite{GorskyNemethi} on the semicontinuity of the semigroup of an algebraic knot under deformations of singularities, in the cuspidal case. A similar result was obtained by Borodzik and Livingston \cite{BorodzikLivingston-def} under stronger assumptions (see \cref{s:semicont} for details).

\begin{thm}\label{t:semicont}
Assume there exists a deformation of an irreducible plane curve singularity with semigroup $\Gamma_K$ to an irreducible plane curve singularity with semigroup $\Gamma_L$.
Then for each non-negative integer 
\[
\# (\Gamma_K \cap [0,n)) \leq \# (\Gamma_L \cap [0,n)).
\]
\end{thm}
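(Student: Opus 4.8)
The plan is to deduce the semicontinuity statement from the cobordism bound implicit in \cref{t:maincomputation}. The key topological input is that a deformation of an irreducible plane curve singularity with semigroup $\Gamma_K$ to one with semigroup $\Gamma_L$ produces a smooth cobordism in $S^3\times[0,1]$ between the algebraic knots $K$ and $L$ whose genus equals $g(K)-g(L)$. Indeed, deforming the singularity replaces a single cusp by the milder singularity, and the regular part of the deformed curve is a smooth surface interpolating between the two links; its genus is controlled by the difference of the delta-invariants, which for algebraic knots coincide with the Seifert genera. I would first make this geometric statement precise, citing the standard fact that $\delta=g$ for algebraic knots and that the cobordism arising from a deformation has the claimed genus.

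Next I would translate this into a constraint on $\nu^+$. Carving the cobordism $F$ along an arc yields a slice surface for $K\#\overline{L}$ of genus $g(F)=g(K)-g(L)$, and $\nu^+$ bounds the slice genus from below: concretely, $\nu^+(K\#\overline L)\le g_4(K\#\overline L)\le g(K)-g(L)$. Combining this inequality with the explicit formula of \cref{t:maincomputation}, namely
\[
\nu^+(K\#\overline L)=\max\left\{g(K)-g(L)+\max_{n\ge 0}\{\Gamma_L(n)-\Gamma_K(n)\},\,0\right\},
\]
forces the term inside the maximum to satisfy $g(K)-g(L)+\max_{n\ge 0}\{\Gamma_L(n)-\Gamma_K(n)\}\le g(K)-g(L)$, which immediately gives $\max_{n\ge 0}\{\Gamma_L(n)-\Gamma_K(n)\}\le 0$.

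Finally I would unwind the notation. By definition the enumerating function $\Gamma_K(n)$ counts the semigroup elements below $n$, so $\Gamma_K(n)=\#(\Gamma_K\cap[0,n))$, and similarly for $L$. The inequality $\Gamma_L(n)-\Gamma_K(n)\le 0$ for all $n\ge 0$ is therefore exactly the assertion
\[
\#(\Gamma_K\cap[0,n))\le \#(\Gamma_L\cap[0,n))
\]
for every non-negative integer $n$, completing the proof.

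The main obstacle I anticipate is the first step: verifying that a complex deformation of cusp singularities actually yields a smooth cobordism of the advertised genus, with the correct orientations so that the resulting slice surface is for $K\#\overline L$ rather than $\overline K\# L$. This requires care with the Milnor fibration and with the precise relationship between the genus of the deformation cobordism and the drop in delta-invariants; once that geometric input is secured, the Heegaard Floer computation does all the remaining work essentially formally.
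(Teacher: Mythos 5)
Your overall strategy is exactly the paper's: a deformation yields a genus-$(g(K)-g(L))$ cobordism between $K$ and $L$, the invariant $\nu^+(K\#\overline L)$ bounds the genus of any such cobordism from below, and feeding this into \cref{t:maincomputation} gives $\Gamma_L(n)\le\Gamma_K(n)$ for all $n\ge 0$. Up to that point your argument is correct; the paper packages this same deduction as \cref{prop:semi}.

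The last step, however, is wrong as written. The enumerating function is not the counting function: by the paper's definition, $\Gamma_K(n)$ is the $n$-th smallest element of the semigroup, not the number of semigroup elements below $n$ (for the trefoil, whose semigroup is $\{0,2,3,4,\dots\}$, one has $\Gamma_K(1)=2$ while $\#(\Gamma_K\cap[0,1))=1$). Worse, if your identification were correct, the inequality you derived, $\Gamma_L(n)\le\Gamma_K(n)$, would translate to $\#(\Gamma_L\cap[0,n))\le\#(\Gamma_K\cap[0,n))$, which is the \emph{reverse} of the statement you are proving; your write-up only lands on the right conclusion because you transcribed the desired inequality rather than what your identification actually yields. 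The correct bridge --- and this is how the paper finishes, in the proof of \cref{prop:semi} --- is that the enumerating and counting functions are inverse to one another, so pointwise inequalities reverse direction when passing between them: writing $R_K(n)=\#(\Gamma_K\cap[0,n))$, one has $R_K(n)=\#\{m\ge 0 : \Gamma_K(m)<n\}$, and since $\Gamma_L(m)\le\Gamma_K(m)$ for every $m$, every index $m$ with $\Gamma_K(m)<n$ also satisfies $\Gamma_L(m)<n$, whence $R_K(n)\le R_L(n)$. With this one-line repair your proof is complete and coincides with the paper's.
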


Finally, we turn to proving some properties of the function $\nu^+$. The first one reflects analogous properties for other invariants (signatures, $\tau$, $s$, etc.)
 and gives lower bounds for the unknotting number and related concordance invariants (see \cref{s:cob} below).

\begin{thm}\label{p:crossingchanges}
If $K_+$ is obtained from $K_-$ by changing a negative crossing into a positive one, then
\[
\nu^+(K_-) \le \nu^+(K_+) \le \nu^+(K_-)+1.
\]
\end{thm}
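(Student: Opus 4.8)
The plan is to deduce both inequalities from the behaviour of the sequence $V_s = V_s(K)$ of local invariants under a crossing change. Recall that $\nu^+$ may be rewritten as $\nu^+(K)=\min\{s\ge 0 : V_s(K)=0\}$, where the $V_s(K)$ are the nonnegative, nonincreasing integers read off from the large surgery formula, with consecutive differences in $\{0,1\}$ and $V_s(K)=0$ for $s\gg0$. With this description in hand it suffices to prove the two-sided bound
\[
V_{s+1}(K_+)\;\le\;V_s(K_-)\;\le\;V_s(K_+)\qquad(s\ge 0).
\]
Indeed, the right-hand inequality at $s=\nu^+(K_+)$ forces $V_s(K_-)=0$, whence $\nu^+(K_-)\le\nu^+(K_+)$; the left-hand inequality at $s=\nu^+(K_-)$ forces $V_{s+1}(K_+)=0$, whence $\nu^+(K_+)\le\nu^+(K_-)+1$. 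I emphasise that a naive pointwise bound $V_s(K_+)\le V_s(K_-)+1$ is \emph{not} enough for the second conclusion: the shift of the index by one is essential, and recovering exactly this shift will be the delicate point.

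To produce these inequalities I would pass to large surgeries. Let $U$ be the crossing circle, an unknot bounding a disk meeting $K_-$ in the two strands of the crossing; then $K_+$ arises from $K_-$ by $(\pm1)$-surgery on $U$, and combining this with a large surgery $S^3_N(\,\cdot\,)$ identifies $S^3_N(K_+)$ with the manifold obtained from $S^3_N(K_-)$ by attaching a single $(\pm1)$-framed $2$-handle along the image of $U$. The resulting trace is a cobordism with $b_2=1$ and unimodular, definite intersection form, hence (in the appropriate direction, and for the appropriate choice of the two crossing changes relating $K_-$ and $K_+$) a negative definite cobordism. The engine is then Ozsv\'ath and Szab\'o's $d$-invariant inequality for negative definite cobordisms, combined with the Ni--Wu identification $d\big(S^3_N(K),[s]\big)=(\text{lens space term})-2V_s(K)$: each spin$^c$ structure on the cobordism yields a relation between a $d$-invariant of a filling of $S^3_N(K_+)$ and one of $S^3_N(K_-)$, and translating these back through the identification produces the inequalities of the sandwich, the $\pm1$ framing of the $2$-handle accounting for the shift $s\mapsto s+1$.

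The main obstacle will be the spin$^c$ bookkeeping. One must match the spin$^c$ structures on $S^3_N(K_-)$ and $S^3_N(K_+)$ that compute a fixed $V_s$, determine which pairs are joined by a spin$^c$ structure on the cobordism, and verify that the extremal choice of first Chern class produces exactly a unit shift of the index rather than a larger jump; the relevant input is the value of $c_1^2$ on the generator of $H_2$, which in turn depends on the framing and on $\mathrm{lk}(K_-,U)$. A related subtlety is that the negative definite cobordism controls only one side of the sandwich, namely an upper bound on the $V_s(K_+)$, so the monotone inequality $V_s(K_-)\le V_s(K_+)$ must be extracted from the $2$-handle cobordism associated to the reverse crossing change (after reversing orientation this relates the mirror knots), together with the standard comparison between the local invariants of a knot and of its mirror. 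Reconciling these two sources, and checking that the signs and shifts are mutually consistent, is where the real work lies.
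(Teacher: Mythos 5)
Your reduction of the theorem to the sandwich $V_{s+1}(K_+)\le V_s(K_-)\le V_s(K_+)$ is exactly right, and it matches the paper's logic (the paper gets the right-hand inequality from \cite[Theorem 6.1]{BorodzikHedden} and the left-hand one from a genus-one cobordism via \cref{l:gvsnu}); you are also right that the naive pointwise bound $V_s(K_+)\le V_s(K_-)+1$ would not suffice. The gap is in the proposed mechanism for the shifted inequality. Your identification of $S^3_N(K_+)$ with the result of attaching a single $(\pm1)$-framed $2$-handle to $S^3_N(K_-)$ along the crossing circle $U$, \emph{with the same coefficient $N$ on both ends}, forces $\mathrm{lk}(U,K_-)=0$ (otherwise the blow-down changes the framing of the knot by $\mathrm{lk}^2$). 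But with $\mathrm{lk}(U,K_-)=0$ the spin$^c$ correspondence across this cobordism has no shift at all: closing the cobordism up with the surgery handlebody exhibits the total space as a blow-up of the trace of the $N$-surgery on the other knot, and in its second homology the two capped Seifert surface classes \emph{coincide}. Indeed each has square $N$ and is orthogonal to the exceptional class $E$ (orthogonality is automatic for the class in the connected-sum summand, and for the handle-picture class it equals $\mathrm{lk}(U,K)=0$), and in the rank-two lattice $\langle N\rangle\oplus\langle-1\rangle$ these two conditions pin the class down. Hence every spin$^c$ structure on the cobordism restricts with the same label $\ft_s$ to both ends, and the negative definite $d$-invariant inequality combined with Ni--Wu yields only the unshifted inequality $V_s(K_-)\le V_s(K_+)$. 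Moreover this is what you get from \emph{both} directions of the construction (the circle around the negative crossing of $K_-$ with one framing, or the circle around the positive crossing of $K_+$ with the opposite framing), so your plan of extracting the monotone inequality from one direction and the shifted one from the other cannot work: the ``$\pm1$ framing accounting for the shift $s\mapsto s+1$'' does not exist, and your argument proves the first inequality of the theorem but not the second.

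A shift can only come from a construction in which the two relevant surface classes differ by $2E$. That happens if you blow up the double point of the crossing-change homotopy, so that the proper transform of the immersed annulus lies in class $[A]-2E$ (the coefficient $2$ is precisely what moves the spin$^c$ label by one), or equivalently if you use the twisting circle with $\mathrm{lk}(U,K_-)=\pm2$, which however changes the surgery coefficient from $N$ to $N\mp4$ and so destroys the identification you rely on. The paper sidesteps this bookkeeping altogether: it smooths the double point to get a genus-one cobordism from $K_-$ to $K_+$ and applies \cref{l:gvsnu}, whose proof (via the Borromean knot $K_{B,1}$, the bottom-most correction terms of the manifold $Y_n$, and Behrens' negative semidefinite inequality) is where the index shift $V_{m+1}(K_+)\le V_m(K_-)$ is actually produced; there the shift comes from the genus of the cobordism, not from the framing of a $2$-handle. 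To repair your proof you would need to carry out one of these genuinely different arguments for the left-hand inequality of your sandwich.
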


\begin{thm}\label{t:subadd}
The function $\nu^+$ is subadditive. Namely, for any two knots $K, L\subset S^3$,
\[
\nu^+(K\#L) \le \nu^+(K) + \nu^+(L).
\]
\end{thm}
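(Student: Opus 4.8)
The plan is to reduce the statement to a connected-sum inequality for the local $h$-invariants $V_s$ and then prove that inequality by a diagram chase. Recall from Hom--Wu \cite{HomWu} that $\nu^+(K) = \min\{s \ge 0 : V_s(K) = 0\}$, where the $V_s(K)\ge 0$ are non-negative integers, non-increasing in $s$. First I would observe that it suffices to establish, for all $m,n \ge 0$,
\begin{equation}
V_{m+n}(K\#L) \le V_m(K) + V_n(L). \tag{$\star$}
\end{equation}
Indeed, taking $m = \nu^+(K)$ and $n = \nu^+(L)$ makes the right-hand side $0$, and since $V_{m+n}(K\#L)\ge 0$ we get $V_{\nu^+(K)+\nu^+(L)}(K\#L) = 0$; as $\nu^+(K\#L)$ is the least index at which $V$ vanishes, this yields $\nu^+(K\#L)\le \nu^+(K)+\nu^+(L)$.

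To prove $(\star)$ I would work inside $C = \mathrm{CFK}^\infty(K)\otimes_{\F[U,U^{-1}]}\mathrm{CFK}^\infty(L)$, which computes $\mathrm{CFK}^\infty(K\#L)$ by the K\"unneth formula of Ozsv\'ath--Szab\'o. Writing $A_s^-$ and $B^-$ for the subcomplexes of $\mathrm{CFK}^\infty$ cut out by $\{i\le 0,\,j\le s\}$ and $\{i\le 0\}$ respectively, the invariant $V_s$ is the $U$-power by which the inclusion $v_s^-\colon A_s^-\hookrightarrow B^-$ acts on the free part of homology. The decisive algebraic observation is that the minus-regions add up correctly: a product $x\otimes y$ with $x\in A_m^-(K)$ and $y\in A_n^-(L)$ has bidegree $(i,j)=(i_x+i_y,\,j_x+j_y)$ with $i\le 0$ and $j\le m+n$, so tensoring the inclusions produces a commutative square of chain maps, all of them inclusions of subcomplexes of $C$,
\[
\begin{CD}
A_m^-(K)\otimes A_n^-(L) @>{\Phi}>> A_{m+n}^-(K\#L)\\
@V{v_m^-\otimes v_n^-}VV @VV{v_{m+n}^-}V\\
B^-(K)\otimes B^-(L) @>{\Psi}>> B^-(K\#L).
\end{CD}
\]
Here $\Psi$ is the identification $\mathrm{CF}^-(S^3)\otimes\mathrm{CF}^-(S^3)\simeq \mathrm{CF}^-(S^3\#S^3)=\mathrm{CF}^-(S^3)$, which is an isomorphism on the tower because $d(S^3\# S^3)=0$.

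The inequality then drops out of a chase on the tower generators. Let $\theta_m,\theta_n$ generate the free parts of $H_*(A_m^-(K))$ and $H_*(A_n^-(L))$, and let $g$ generate the tower of $H_*(B^-(K\#L))$. Going down-then-right sends $\theta_m\otimes\theta_n$ to $U^{V_m(K)+V_n(L)}g$, while going right-then-down sends it to $U^{\,e+V_{m+n}(K\#L)}g$, where $U^e$ (with $e\ge 0$) records the tower component of $\Phi_*(\theta_m\otimes\theta_n)$ in $H_*(A_{m+n}^-(K\#L))$; note $e$ is necessarily finite since the down-then-right image is a nonzero element of a free $\F[U]$-module. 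Comparing exponents gives $e+V_{m+n}(K\#L)=V_m(K)+V_n(L)$, and $e\ge 0$ yields $(\star)$.

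I expect the main obstacle to be the careful justification of the commutative square rather than the chase itself: one must invoke the connected-sum quasi-isomorphism for $\mathrm{CFK}^\infty$, check that the tensor product of the minus-subcomplexes really embeds as the claimed subcomplex of $C$ (the region computation above), and---most delicately---track Maslov gradings so that $\Psi$ is degree-preserving on the towers and so that each $v_s^-$ genuinely realizes multiplication by $U^{V_s}$ in the normalization of \cite{HomWu}. Once the square is in place, the rest is formal.
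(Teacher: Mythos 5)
Your proof is correct, and while the reduction step coincides with the paper's, the proof of the key inequality is a genuinely different route. Your inequality $(\star)$, namely $V_{m+n}(K\#L)\le V_m(K)+V_n(L)$, is precisely \cref{p:subaddV} of the paper, and the specialization $m=\nu^+(K)$, $n=\nu^+(L)$ is verbatim how the paper deduces \cref{t:subadd} from it. The difference is entirely in how $(\star)$ is established. The paper's argument is four-dimensional: following Owens--Strle \cite{OwensStrle}, it builds a negative semidefinite cobordism $W$ between $S^3_{2m}(K)\#S^3_{2n}(L)$ and $S^3_{2(m+n)}(K\#L)$, applies the correction-term inequality of \cite[Theorem 4.1]{Behrens}, and converts $d$--invariants into $V$'s via the Ni--Wu formula \eqref{e:NiWu}; this forces a case analysis when $m=0$ or $n=0$, where twisted correction terms $\underline{d}$ of zero-surgeries enter. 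Your argument instead stays at the chain level: the filtered K\"unneth theorem for $\mathrm{CFK}^\infty$, the observation that the regions $\{i\le 0,\ j\le s\}$ add under tensor products, and a chase on tower generators. This buys uniformity (no case analysis, no manifolds with $b_1>0$, no 4--manifold input beyond the large-surgery interpretation of $V_s$), at the cost of the two standard facts you rightly flag: that the minus-flavored inclusion $v_s^-\colon A_s^-\hookrightarrow B^-$ realizes the same $V_s$ as the plus-flavored definition of \cite{HomWu,NiWu}, and that the K\"unneth equivalence respects both filtrations, so that computing $A^-_{m+n}(K\#L)$ and $B^-(K\#L)$ in the tensor-product model is legitimate. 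Two cosmetic points in your chase: the tower component of $\Phi_*(\theta_m\otimes\theta_n)$ is a priori a polynomial $p(U)$ rather than a monomial $U^e$ (homogeneity of the Maslov grading, or the comparison of the two composites itself, forces $p(U)=U^e$ with $e\ge 0$); and one should note that any torsion component of $\Phi_*(\theta_m\otimes\theta_n)$ dies under $v^-_{m+n,*}$ because $H_*(B^-(K\#L))\cong\F[U]$ is torsion-free---this is what makes the exponent comparison clean. Conversely, the paper's topological route recycles machinery (negative semidefinite cobordisms plus \cite{Behrens}) that it already needs for \cref{l:gvsnu}, so within the paper it comes at little marginal cost, and it illustrates a technique that applies when one has only cobordism-level, rather than chain-level, information.
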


As an application, we consider some concordance invariants, also studied by Owens and Strle \cite{OwensStrle-unknotting}.
Recall that the \emph{concordance unknotting number} $u_c(K)$ of a knot $K$ is the minimum of unknotting numbers among all knots that are concordant to $K$; the \emph{slicing number} $u_s(K)$ of $K$ is the minimal number of crossing changes needed to turn $K$ into a slice knot; finally, the \emph{$4$--ball crossing number} $c^*(K)$ is the minimal number of double points of an immersed disc in the $4$--ball whose boundary is $K$. It is quite remarkable that there are knots for which these quantities disagree \cite{OwensStrle-unknotting}.

\begin{prop}\label{p:concordanceinvariants}
The unknotting number, concordance unknotting number, slicing number, and 4--ball crossing number of $K$ are all bounded from below by $\nu^+(K) + \nu^+(\overline K)$.
\end{prop}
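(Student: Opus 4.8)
The plan is to derive Proposition~\ref{p:concordanceinvariants} as a formal consequence of the two structural results about $\nu^+$ already at our disposal, namely the crossing-change inequality (\cref{p:crossingchanges}) and subadditivity (\cref{t:subadd}), together with a basic comparison between $u$, $u_c$, $u_s$, and $c^*$. The first observation I would record is that these four quantities are nested: since a concordance to a knot with small unknotting number, a sequence of crossing changes producing a slice knot, or an immersed disc with few double points are all progressively weaker data, one has $u_c(K)\le u_s(K)\le c^*(K)$ and $u_c(K)\le u(K)$. Hence it suffices to bound the \emph{smallest} of these, $u_c(K)$, from below by $\nu^+(K)+\nu^+(\overline K)$; all the other inequalities then follow.

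\smallskip

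First I would reduce to the slicing number $u_s$. If $K'$ is concordant to $K$ and can be unknotted in $u(K')=u_c(K)$ crossing changes, then $\nu^+(K')=\nu^+(K)$ and $\nu^+(\overline{K'})=\nu^+(\overline K)$ because $\nu^+$ is a concordance invariant (this is the defining property of the Hom--Wu invariant). So it is enough to prove the bound for $u_s$ and $c^*$ directly and then invoke concordance invariance. Concretely, suppose $K$ is turned into a slice knot $S$ by $p$ positive-to-negative and $q$ negative-to-positive crossing changes, with $p+q=u_s(K)$. Applying \cref{p:crossingchanges} repeatedly, each crossing change alters $\nu^+$ by at most $1$; reversing the direction of a crossing change, the inequality $\nu^+(K_+)\le\nu^+(K_-)+1$ shows that passing from $K_+$ to $K_-$ can drop $\nu^+$ by at most $1$, so along the whole sequence $\nu^+(K)\le\nu^+(S)+(\text{number of steps})$. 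Since $S$ is slice, $\nu^+(S)=0$, giving $\nu^+(K)\le u_s(K)$. The same argument applied to the mirror sequence — crossing changes on $K$ correspond to crossing changes on $\overline K$ — yields $\nu^+(\overline K)\le u_s(K)$.

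\smallskip

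The step that needs the most care, and which I expect to be the genuine obstacle, is combining the two one-sided bounds into the \emph{sum} $\nu^+(K)+\nu^+(\overline K)$ rather than just $\max$. The clean way to do this is to unknot $K$ and $\overline K$ \emph{simultaneously} inside a connected sum: a slicing sequence of length $u_s(K)$ for $K$ gives, after mirroring, a slicing sequence of the same length for $\overline K$, and taking the connected sum $K\#\overline K$ one can realize a single sequence of $2\,u_s(K)$ crossing changes — $u_s(K)$ on each summand — turning $K\#\overline K$ into the slice knot $S\#\overline S$. I would then invoke \cref{t:subadd} to get $\nu^+(K)+\nu^+(\overline K)\ge\nu^+(K\#\overline K)$ and compare this against the crossing-change count; alternatively, and more directly, I would use the fact that $K\#\overline K$ is itself slice (it bounds a ribbon disc), so that a slicing sequence for $K$ of length $u_s(K)$ composed with its mirror unknots $K\#\overline K$, whence $\nu^+(K\#\overline K)\le\nu^+(K)+\nu^+(\overline K)$ must be reconciled with the crossing-change bound. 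The cleanest formulation avoids the connected sum entirely: one observes that a crossing change on $K$ realizing $u_s$ is a single positive-to-negative or negative-to-positive modification, and by tracking the two inequalities in \cref{p:crossingchanges} separately one sees that $p$ crossing changes of one sign can decrease $\nu^+(K)$ by at most $p$, while the $q$ of the other sign constrain $\nu^+(\overline K)$ symmetrically, since mirroring swaps the roles of positive and negative crossings. Adding these two bounds and using $\nu^+(S)=\nu^+(\overline S)=0$ produces $\nu^+(K)+\nu^+(\overline K)\le p+q=u_s(K)$. The main subtlety is bookkeeping the signs under mirroring so that the two applications of \cref{p:crossingchanges} genuinely add rather than overlap; once that is pinned down the inequalities for $u$, $u_c$, and $c^*$ follow from the nesting and concordance invariance recorded at the outset.
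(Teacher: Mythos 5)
Your treatment of the unknotting number, concordance unknotting number, and slicing number is correct, and in its final form it is essentially the paper's own argument: positive-to-negative crossing changes decrease $\nu^+$ by at most one while negative-to-positive changes cannot decrease it (\cref{p:crossingchanges}), mirroring exchanges the two types, so the $p$ changes of one sign give $\nu^+(K)\le p$, the $q$ of the other sign give $\nu^+(\overline K)\le q$, and adding yields $u_s(K)=p+q\ge\nu^+(K)+\nu^+(\overline K)$; concordance invariance of $\nu^+$ then takes care of $u_c$, and $u\ge u_s$ takes care of $u$. The detour through connected sums and \cref{t:subadd} contributes nothing --- indeed $K\#\overline K$ is slice, so $\nu^+(K\#\overline K)=0$ and that inequality is vacuous --- but you correctly discard it in favor of the sign-counting argument.

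The genuine gap is the $4$--ball crossing number, and it comes from a reversed inequality. Your claimed nesting $u_c(K)\le u_s(K)\le c^*(K)$ is backwards: precisely because an immersed disc is the \emph{weakest} of the three pieces of data, the minimal count $c^*(K)$ is the \emph{smallest} of these invariants, not the largest. Concretely, a sequence of $u_s(K)$ crossing changes turning $K$ into a slice knot $S$ traces out an immersed annulus with $u_s(K)$ double points, which capped with a slice disc for $S$ gives $c^*(K)\le u_s(K)$; similarly, an embedded concordance from $K$ to $K'$ followed by the trace of an unknotting homotopy of $K'$ gives $c^*(K)\le u_c(K)$. (Moreover $u_s$ and $u_c$ are not comparable in any obvious way, which is part of the point of \cite{OwensStrle-unknotting}.) So the bound for $c^*$ does not follow from the bound for $u_s$; it is strictly stronger, and it is where the remaining content of the proposition lies: the double points of an immersed disc are not crossing changes performed on $K$ itself, so \cref{p:crossingchanges} cannot be applied to them directly. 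The paper closes this gap with the factorization result \cite[Proposition 2.1]{OwensStrle-unknotting}: any immersed disc for $K$ with $c$ double points can be factored into an embedded concordance from $K$ to a knot $K_1$, a sequence of $c$ crossing changes from $K_1$ to a knot $K_0$, and a slice disc for $K_0$. Applying your sign-tracking argument to that sequence and then concordance invariance gives $c\ge\nu^+(K_1)+\nu^+(\overline{K_1})=\nu^+(K)+\nu^+(\overline K)$. Without this (or an equivalent) geometric input, your proposal establishes only three of the four inequalities.
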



\subsection{Organisation of the paper} In \cref{s:prelim} we recall some facts about Heegaard Floer correction terms and reduced knot Floer complex. In \cref{s:computenu} we prove \cref{t:maincomputation} as a corollary of \cref{t:maincompL}, and in \cref{s:semicont} we prove \cref{t:semicont}. In \cref{s:cob} we study cobordisms between arbitrary knots and prove \cref{p:crossingchanges} and \cref{p:concordanceinvariants}; in \cref{s:subadd} we prove \cref{t:subadd}. Finally, in \cref{s:examples} we study some concrete examples.

\subsection{Acknowledgments} We would like to thank Paolo Aceto, Maciej Borodzik, and Kouki Sato for interesting conversations, and Maciej Borodzik for providing us with some computational tools.
The first author has been supported by the ERC grant LDTBud at MTA Alfr\'ed R\'enyi Institute of Mathematics.
The third author was partially supported by the PRIN--MIUR research project 2010--11 ``Variet\`a reali e complesse: geometria, topologia e analisi armonica'' and by the FIRB research project ``Topologia e geometria di variet\`a in bassa dimensione''.

\section{Singularities and Heegaard Floer homology}\label{s:prelim}

\subsection{Links of curve singularities}

In what follows, $K$ and $L$ will be two \emph{algebraic knots}. We will recall briefly what this means and also what invariants can be associated with such knots. For further details, we refer to \cite{BrieskornKnorrer,EisenbudNeumann,Wall}.

Assume $F \in \mathbb{C}[x,y]$ is an \emph{irreducible} polynomial which defines an \emph{isolated irreducible plane curve singularity}.
This means that $F(0,0) = 0$ and in a sufficiently small neighbourhood $B_{\varepsilon} = \{ |x|^2+|y|^2 \leq \varepsilon^2 \}, \varepsilon > 0$ of the origin, $\partial_1 F(x,y) = \partial_2 F(x,y) = 0$ holds if and only if $(x,y) = (0,0)$.
The \emph{link} of the singularity is the zero set of $F$ intersected with a sphere of sufficiently small radius: $K = \{F(x,y) = 0\} \cap \partial B_{\varepsilon}$.
Since $F$ is irreducible, $K$ is a knot, rather than a link, in the 3--sphere $\partial B_{\varepsilon}$.
A knot is called \emph{algebraic} if its isotopy type arises in the above described way.
All algebraic knots are \emph{iterated torus knots}, i.e. they arise by iteratively cabling a torus knot.

The zero set of every isolated irreducible plane curve singularity admits a local parametrization, i.e. there exists $x(t), y(t) \in \mathbb{C}[[t]]$ such that $F(x(t),y(t)) \equiv 0$ and $t \mapsto (x(t),y(t))$ is a bijection for $|t| < \eta \ll 1$ to a neighbourhood of the origin in the zero set of $F$. Consider the following set of integers:
\[ \Gamma_K = \{\textrm{ord}_t\, G(x(t),y(t)) \mid F \nmid G \in \mathbb{C}[[x,y]] \}. \]
It can be seen easily that $\Gamma_K$ is an additive semigroup. It depends only on the local topological type of the singularity; therefore, it can be seen as an invariant of the isotopy type of the knot $K$. We will say that $\Gamma_K$ is the \emph{semigroup of the algebraic knot} $K$.

We denote with $\N = \{0,1,\dots\}$ the set of non-negative integers.
The semigroup $\Gamma_K$ is a cofinite set in $\N$; in fact, $|\N \setminus \Gamma_K| = \delta_K < \infty$ and the greatest element not in $\Gamma_K$ is $2\delta_K-1$. The number $\delta_K$ is called the $\delta$--\emph{invariant} of the singularity. It is well-known that $\delta_K$ is the Seifert genus of $K$: $\delta_K = g(K)$.

We also write $\Gamma_K(n)$ for the $n$--th element of $\Gamma_K$ with respect to the standard ordering of $\N$, with the convention that $\Gamma_K(0) = 0$. The function $\Gamma_K(\cdot)$ will be called the \emph{enumerating function} of $\Gamma_K$.

\subsection{Heegaard Floer and concordance invariants}

Heegaard Floer homology is a family of invariants of 3--manifolds introduced by Ozsv\'ath and Szab\'o \cite{OzsvathSzabo-HF}; in this paper we are concerned with the `minus' version over the field $\F=\Z/2\Z$ with two elements.
It associates to a rational homology sphere $Y$ equipped with a \spinc structure $\ft$ a $\Q$--graded $\F[U]$--module $\HFp\Yt$; the action of $U$ decreases the degree by 2.

The group $\HFp\Yt$ further splits as a direct sum of $\F[U]$--modules $\F[U]\oplus\HFpred\Yt$.
We call $\F[U]$ the \emph{tower} of $\HFp\Yt$.
The degree of the element $1\in \F[U]$ is called the \emph{correction term} of $\Yt$, and it is denoted by\footnote{
Note that our definition of $d\Yt$ would differ by 2 from the original definition of~\cite{OzsvathSzabo-absolutely}; however, it is more convenient for our purposes to use a shifted grading in $\HFp$.
}
$d\Yt$.

When $Y$ is obtained as 
an integral surgery along a knot $K$ in $S^3$, one can recover the correction terms of $Y$ in terms of a family of invariants introduced by Rasmussen \cite{Rasmussen-Goda} and then further studied by Ni and Wu \cite{NiWu}, and Hom and Wu \cite{HomWu}. We call these invariants $\{V_i(K)\}_{i\ge 0}$, adopting Ni and Wu's notation instead of Rasmussen's --- who used $h_i(K)$ instead --- as this seems to have become more standard.

Recall that there is an indexing of \spinc structures on $S^3_n(K)$ as defined in \cite[Section 2.4]{OzsvathSzabo-integralsurgeries}: $S^3_n(K)$ is the boundary of the surgery handlebody $W_n(K)$ obtained by attaching a single 2--handle with framing $n$ along $K\subset \del B^4$. 
Notice that, if we orient $K$ there is a well-defined generator $[F]$ of $H_2(W_n(K);\Z)$ obtained by capping off a Seifert surface of $K$ with the core of the 2--handle. 
The \spinc structure $\ft_k$ on $S^3_n(K)$ is defined as the restriction of a \spinc structure $\fs$ on $W_n(K)$ such that
\begin{equation}\label{e:spinclabel}
\langle c_1(\fs), [F] \rangle \equiv n+2k \pmod{2n}
\end{equation}

\begin{thm}[\cite{Rasmussen-Goda, NiWu}]
The sequence $\{V_i(K)\}_{i\ge 0}$ takes values in $\N$ and is eventually 0. Moreover, $V_{i}(K)-1 \le V_{i+1}(K) \le V_i(K)$ for every $i$.

With the spin$^c$ labelling defined in \eqref{e:spinclabel} above, for every integer $n$ we have
\begin{equation}\label{e:NiWu}
d(S^3_{n}(K),\ft_i) = -2\max\{V_{i}(K),V_{n-i}(K)\} + \frac{(n-2i)^2-n}{4n}
\end{equation}
\end{thm}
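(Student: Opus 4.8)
The statement splits into two independent halves: the structural properties of the sequence $\{V_i(K)\}$, and the correction-term formula \eqref{e:NiWu}. The plan is to treat them separately, both through the large-surgery and integer-surgery formulas.

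First I would recall that $V_i$ is read off from the vertical projection $v_i\colon A_i\to B$ between subquotient complexes of ${\rm CFK}^\infty(K)$, where $B\simeq \mathrm{CF}^-(S^3)$ has homology the tower $\F[U]$ and $v_i$ acts on this tower as multiplication by $U^{V_i}$. Nonnegativity of $V_i$ is then immediate: $v_i$ is a $U$-equivariant chain map which is an isomorphism after inverting $U$, so on towers it is $U^{V_i}$ with $V_i\in\N$. For the vanishing, I would use that the knot filtration is supported in the window determined by the Seifert genus, so that for $i\ge g(K)$ the region defining $A_i$ already forces $v_i$ to be an isomorphism, whence $V_i=0$. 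Monotonicity is the one point requiring a small argument: the regions defining $A_i$ and $A_{i+1}$ differ by a single diagonal, which yields natural chain maps comparing $v_i$ and $v_{i+1}$; chasing these on the towers shows that the $U$-exponents change by at most one unit and are non-increasing in $i$, giving $V_i-1\le V_{i+1}\le V_i$.

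For \eqref{e:NiWu} I would invoke the integer surgery formula of Ozsv\'ath and Szab\'o, which identifies $\HFp(S^3_n(K),\ft_i)$ with the homology of the mapping cone of $D\colon \bigoplus_{s\equiv i} A_s\to\bigoplus_{s\equiv i} B_s$, where $D=\sum_s(v_s+h_s)$ with $v_s\colon A_s\to B_s$ and $h_s\colon A_s\to B_{s+n}$; the horizontal map $h_s$ acts on the tower as $U^{H_s}$, and the conjugation symmetry of ${\rm CFK}^\infty$ gives $H_s=V_{-s}$. Since the correction term only sees the tower, I would pass to the tower model, replacing every $A_s$ and $B_s$ by $\F[U]$ and $v_s,h_s$ by $U^{V_s},U^{V_{-s}}$; the $U$-torsion summands contribute only to $\HFpred$ and do not move the bottom of the tower. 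All these maps are isomorphisms for $|s|$ large ($V_s=H_s=0$), so after cancelling those pairs the cone reduces to a finite zig-zag with a single surviving tower, whose generator sits in the central group $B_i$. The two maps entering $B_i$ are $v_i$, acting as $U^{V_i}$, and $h_{i-n}$, acting as $U^{H_{i-n}}=U^{V_{n-i}}$; a short computation with the cone differential then shows that the surviving free part begins at height $\max\{V_i,V_{n-i}\}$, which is exactly the term $-2\max\{V_i,V_{n-i}\}$.

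It remains to identify the rational shift $\tfrac{(n-2i)^2-n}{4n}$, which is the absolute grading shift of the two-handle cobordism $W_n(K)$ and depends only on $n$ and $i$ through \eqref{e:spinclabel}; I would pin it down by normalizing against the unknot, for which all $V_i=0$ and $S^3_n(\unknot)=L(n,1)$ has known correction terms. The hard part is the grading bookkeeping: carrying the absolute $\Q$-grading through the identifications $A_s\simeq$ (shifted) $\mathrm{CF}^-(S^3)$ and through the mapping cone, and checking that the surviving generator lands precisely $2\max\{V_i,V_{n-i}\}$ below the lens-space value. One must also keep track of the sign of $n$: for $n<0$ the cone runs the opposite way and the roles of $v_i$ and $h_{i-n}$ (hence of $V_i$ and $V_{n-i}$) are interchanged, but the symmetric maximum renders the final formula independent of this sign.
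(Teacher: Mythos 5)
The paper does not prove this statement at all: it is quoted as background and attributed to \cite{Rasmussen-Goda, NiWu}, so there is no internal proof to compare against. What you have written is essentially a reconstruction of the arguments in those references, and your outline is the right one: the properties of $\{V_i\}$ (values in $\N$, vanishing for $i\ge g(K)$ since $A_i=B$ there, and the step-by-one monotonicity obtained by comparing $A_i$ and $A_{i+1}$ via the inclusion and via multiplication by $U$) are Rasmussen's argument for his $h_i$, while \eqref{e:NiWu} is proved exactly as you say in \cite{NiWu}: the Ozsv\'ath--Szab\'o mapping cone \cite{OzsvathSzabo-integralsurgeries}, conjugation symmetry $H_s=V_{-s}$, truncation, and normalization of the universal grading shifts against the unknot.

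Two points in your sketch of \eqref{e:NiWu} are thinner than they should be. First, the surviving tower does not have its ``generator sitting in the central group $B_i$'': for $n>0$ it is carried by the kernel of $D$ on the $A$-side, i.e.\ by an element $\sum_k U^{x_k}a_k$ spread over \emph{all} the groups $A_{i+kn}$, where $a_k$ is the tower generator of $H_*(A_{i+kn})$ and the exponents solve $x_k+V_{i+kn}=x_{k-1}+H_{i+(k-1)n}$. Consequently the correction term is a priori an extremum over all $k$ of the (shifted) gradings of the $a_k$, not a statement about the two arrows into $B_i$ alone. What makes your claimed answer correct is precisely the monotonicity from the first half of the theorem: $V_s$ is non-increasing, $H_s$ is non-decreasing (indeed $H_s=V_s+s$), and together with the unknot-normalized grading shifts this forces the extremum to occur at $k=0$ or $k=-1$, i.e.\ at the maps $v_i$ and $h_{i-n}$, yielding $\max\{V_i,H_{i-n}\}=\max\{V_i,V_{n-i}\}$. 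You should say this, since it is where the content of the proof lives. Second, the reduction to the ``tower model'' needs justification: the maps $v_{s*},h_{s*}$ need not respect a chosen splitting of $H_*(A_s)$ into tower plus torsion, and the standard fix (as in \cite{NiWu}) is to work in sufficiently negative gradings, or after multiplication by a large power of $U$, where only the towers survive. Neither issue is a wrong turn, but as written both are gaps between your outline and a complete proof.
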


\begin{defn}[\cite{HomWu}]
The minimal index $i$ such that $V_i(K)=0$ is called $\nu^+(K)$.
\end{defn}

\subsection{Reduced knot Floer homology}\label{ss:reducedHFK}
In \cite{Krcat} Krcatovich introduced the \emph{reduced knot Floer complex} $\rCFK(K)$ associated to a knot $K$ in $S^3$.
This complex is graded by the \emph{Maslov grading} and filtered by the \emph{Alexander grading}; the differential decreases the Maslov grading by 1 and respects the Alexander filtration.

Without going into technical details, for which we refer to \cite{Krcat}, any knot Floer complex $\CFK(K)$ can be recursively simplified until the differential on the graded object associated to the Alexander filtration becomes trivial (while the differential on the filtered complex is, in general, nontrivial).
Moreover, $\rCFK(K)$ still retains an $\F[U]$--module structure.

The power of Krcatovich's approach relies in the application to connected sums; if we need to compute $\CFK (K_1 \# K_2) \cong \CFK (K_1) \otimes_{\F [U]} \CFK (K_2)$ we can first reduce $\CFK (K_1)$, and then take the tensor product ${\rm \underline{CFK}^-}(K_1)\otimes_{\F [U]} \CFK (K_2)$.

This is particularly effective when dealing with L-\emph{space knots}, i.e. knots that have a positive integral surgery $Y$ such that $\HFp\Yt = \F[U]$ for every \spinc structure $\ft$ on $Y$. Notice that all algebraic knots are L-space knots \cite[Theorem 1.8]{Hedden}.

In this case, $\rCFK(K)$ is isomorphic to $\F[U]$ as an $\F[U]$--module, and it has at most one generator in each Alexander degree. If we call $x$ the homogeneous generator of $\rCFK(K)$ as an $\F[U]$--module, then $\rCFK(K) = \F[U]x$, and $\{U^nx\}_{n\ge 0}$ is a homogeneous basis of $\rCFK(K)$.

We denote with $\Gamma_K(n)$ the quantity $g(K) - A(U^n\cdot x)$, where $A$ is the Alexander degree, and we call $\Gamma_K(\cdot)$ the \emph{enumerating function} of $K$.
As observed by Borodzik and Livingston \cite[Section 4]{BorodzikLivingston}, when $K$ is an algebraic knot, the function $\Gamma_K(\cdot)$ coincides with the enumerating function of the semigroup associated to $K$ as defined above.
Accordingly, we define the \emph{semigroup} of $K$ as the image of $\Gamma_K$.

\begin{ex}
Observe also that this is not the enumerating function of a semigroup in general; to this end, consider the pretzel knot $K = P(-2,3,7) = 12n_{242}$.
$K$ is an L-space knot with Alexander polynomial $t^{-5}-t^{-4}+ t^{-2}-t^{-1}+ 1-t+ t^2-t^4+ t^5$, hence the function $\Gamma_K(\cdot)$ takes values $0, 3, 5, 7, 8, 10, 11, 12, \dots$.
Since 3 is in the image of $\Gamma_K(\cdot)$ but 6 is not, $\Gamma_K(\cdot)$ is not the enumerating function of a semigroup.
\end{ex}

\subsection{An example}

We are going to show an application of the reduced knot Floer complex in a concrete case. Consider the knot $K = T_{3,7}\#\overline{T_{4,5}}$. The genera, signatures, and $\upsilon$-function~\cite{OzsvathSzaboStipsicz} of $T_{3,7}$ and $T_{4,5}$ all agree: $g(T_{3,7}) = g(T_{4,5}) = 6$, $\sigma(T_{3,7}) = \sigma(T_{4,5}) = 8$, and $\upsilon(T_{3,7}) = \upsilon(T_{4,5}) = -4$. It follows that $\tau(K) = s(K) = \sigma(K) = \upsilon(K) = 0$. However, we can show the following.

\begin{prop}
The knot $K$ satisfies $\nu^+(K) = \nu^+(\overline K) = 1$.
\end{prop}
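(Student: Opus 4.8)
The plan is to use \cref{t:maincomputation} directly, since $K = T_{3,7}\#\overline{T_{4,5}}$ is exactly a connected sum of an algebraic knot with the mirror of another algebraic knot. Both $T_{3,7}$ and $T_{4,5}$ are torus knots, hence algebraic, and their semigroups are the numerical semigroups generated by the two coprime integers: $\Gamma_{T_{3,7}} = \langle 3,7\rangle$ and $\Gamma_{T_{4,5}} = \langle 4,5\rangle$. First I would tabulate the enumerating functions by listing the elements of each semigroup in increasing order. For $\langle 3,7\rangle$ the elements are $0,3,6,7,9,10,12,\dots$ and for $\langle 4,5\rangle$ they are $0,4,5,8,9,10,12,\dots$; both have $\delta = g = 6$, consistent with the stated genera.

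To compute $\nu^+(K) = \nu^+(T_{3,7}\#\overline{T_{4,5}})$, I would set $K = T_{3,7}$ and $L = T_{4,5}$ in \cref{t:maincomputation}. Since $g(K)-g(L) = 0$, the formula reduces to
\[
\nu^+(T_{3,7}\#\overline{T_{4,5}}) = \max\left\{\max_{n\ge 0}\{\Gamma_{T_{4,5}}(n) - \Gamma_{T_{3,7}}(n)\},\,0\right\}.
\]
Comparing the two sequences term by term, the values $\Gamma_{T_{4,5}}(n) - \Gamma_{T_{3,7}}(n)$ are $0,1,-1,1,0,0,\dots$, so the maximum over $n$ is $1$, giving $\nu^+(K) = 1$. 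For $\nu^+(\overline K)$, I would note that $\overline K = \overline{T_{3,7}}\#T_{4,5}$, so applying \cref{t:maincomputation} with the roles of $K$ and $L$ swapped yields
\[
\nu^+(\overline K) = \max\left\{\max_{n\ge 0}\{\Gamma_{T_{3,7}}(n) - \Gamma_{T_{4,5}}(n)\},\,0\right\},
\]
and the same table shows this maximum is again $1$ (attained at $n=2$). Hence $\nu^+(\overline K) = 1$ as well.

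The computation is entirely routine once the semigroups are written down, and the only thing to be careful about is bookkeeping: making sure the enumerating functions are indexed consistently (with $\Gamma(0)=0$) and that one takes the maximum of the difference in each direction separately, since $\nu^+$ is not symmetric under mirroring. There is no real obstacle here; the point of the proposition is illustrative, showing that $\nu^+$ detects a nontrivial cobordism obstruction for $K$ even though all of $\tau$, $s$, $\sigma$, and $\upsilon$ vanish, and the proof is simply an application of the main formula to a well-chosen example.
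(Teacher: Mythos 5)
Your proof is correct, but it takes a genuinely different route from the paper's. The paper proves this proposition in Section 2, \emph{before} \cref{t:maincomputation} is established: the example is there precisely to illustrate Krcatovich's reduced Floer complex in a concrete case, so the paper computes $\rCFK(T_{3,7})$ and $\rCFK(T_{4,5})$ explicitly, tensors each with the staircase complex $\CFK$ of the mirror of the other knot, and reads off from the resulting twisted staircases that $V_0>0$ (the generators marked $\circ$) while $V_1=0$, for both connected sums. You instead invoke \cref{t:maincomputation} with the semigroups $\langle 3,7\rangle$ and $\langle 4,5\rangle$; your enumerations $0,3,6,7,9,10,12,\dots$ and $0,4,5,8,9,10,12,\dots$, the difference sequences $0,1,-1,1,0,\dots$ and their mirror-image counterparts, and the handling of the asymmetry of $\nu^+$ under mirroring are all correct, giving $\nu^+(K)=\nu^+(\overline K)=1$. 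This is not circular: the proposition is never used in the proofs of \cref{t:maincompL} or \cref{t:maincomputation}, so deferring to the main theorem is logically legitimate. What you lose is only expository --- the paper's direct computation exhibits the tensor-product mechanism that powers the general proof, which is the pedagogical point of placing the example where it is --- while what you gain is brevity: once the main formula is available, the verification reduces to routine semigroup arithmetic.
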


\begin{proof}
We need to compute a Floer complex of $T_{3,7}, T_{4,5}$ and their mirrors, as well as the reduced Floer complex of $T_{3,7}, T_{4,5}$. Call $K_1 = T_{3,7}$ and $K_2 = T_{4,5}$

For an L-space knot, and in particular for every positive torus knot, the knot Floer complex is determined by a \emph{staircase complex}, which in the case of $K_1$ and $K_2$ reads as follows:
\[
\begin{array}{ccc}\CFK(K_1) & & \CFK(K_2)\\
\xymatrixcolsep{0.5 pc}\xymatrixrowsep{0.5pc}\xymatrix{
\bullet & \bullet\ar[l]\ar[dd] & & & & &\\
& & & & & &\\
& \bullet & \bullet\ar[l]\ar[dd] & & & &\\
& & & & & &\\
& & \bullet & & \bullet\ar[ll]\ar[d] & &\\
& & & & \bullet & & \bullet\ar[ll]\ar[d]\\
& & & & & & \bullet
}
& \hphantom{abc} &
\xymatrixcolsep{0.5 pc}\xymatrixrowsep{0.5 pc}\xymatrix{
\bullet & \bullet\ar[l]\ar[ddd] & & & & &\\
& & & & & &\\
& & & & & &\\
& \bullet & & \bullet\ar[ll]\ar[dd] & & &\\
& & & & & &\\
& & & \bullet & & &\bullet\ar[lll]\ar[d] \\
& & & & & &\bullet \\
}
\end{array}
\]
The complexes for $\overline K_1$ and $\overline K_2$ are easily computed from these, and they, too, are staircases.

The reduced complex $\rCFK(K_1)$, on the other hand, has a single generator in each of the following bi-degrees $(-i,j)$ (where $-i$ records the $U$--power and $j$ records the Alexander grading):
\[
(0,6), (-1,3), (-2,0), (-3,-1), (-4,-3), (-5,-4), (-6-n,-6-n), n \ge 0.
\]
The reduced complex $\rCFK(K_2)$ has a generator in each of the following bi-degrees:
\[
(0,6), (-1,2), (-2,1), (-3,-2), (-4,-3), (-5,-4), (-6-n,-6-n), n \ge 0.
\]
In both cases, the $U$--action carries a generator with $i$--coordinate $k$ to one with $i$--coordinate $k-1$.
Taking the tensor product over $\F[U]$, one gets twisted staircases as follows, with a generator in bidegree $(0,0)$ (marked with a $\star$):
\[
\begin{array}{ccc}\rCFK(K_1)\otimes\CFK(\overline K_2) & & \rCFK(K_2)\otimes\CFK(\overline K_1)\\
\xymatrixcolsep{0.5 pc}\xymatrixrowsep{0.5 pc}\xymatrix{
& & &\circ\ar[ddlll]\ar[dd] & & \circ\ar[ddd]\ar[ddll] &\\
\bullet\ar[d] & & & & & & \star\ar[ddl] \\
\bullet & & & \bullet& & &\\
& & & & & \bullet &
} & \phantom{abc} &
\xymatrixcolsep{0.5 pc}\xymatrixrowsep{0.5 pc}\xymatrix{
& & & & \circ\ar[dd]\ar[ddll] & & \\
\bullet\ar[d] & & \bullet\ar[dll]\ar[d] & & & & \star\ar[dddl] \\
\bullet & & \bullet & & \bullet & \bullet\ar[l]\ar[dd] & \\
& & & & & & \\
& & & & & \bullet &
}
\end{array}
\]
The generators marked with a $\circ$ exhibit the fact that $V_0(K_1\#\overline{K}_2)$ and $V_0(K_2\#\overline{K}_1)$ are both strictly positive (see \cite[Section 4]{Krcat} for details).
\end{proof}

\section{Computing the invariant}\label{s:computenu}

In this section we are going to prove a version of \cref{t:maincomputation} for L-space knots. Given an integer $x$ we denote with $(x)_+$ the quantity $(x)_+ = \max\{0,x\}$.

\begin{thm}\label{t:maincompL}
Let $K$ and $L$ be two L-space knots with enumerating functions $\Gamma_K(\cdot),\Gamma_L(\cdot):\N\to\N$. Then
\[
\nu^+(K\#\overline{L}) = \left(g(K)-g(L) + \max_{n\ge 0}\{\Gamma_L(n) - \Gamma_K(n)\}\right)_+.
\]
\end{thm}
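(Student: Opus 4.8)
The plan is to reduce the computation of $\nu^+(K\#\overline L)$ to an explicit description of the invariants $V_i$ of the connected sum, and to extract these from the reduced knot Floer complex. The key observation is that, since $K$ and $L$ are L-space knots, the reduced complexes $\rCFK(K)$ and $\rCFK(L)$ are each isomorphic to $\F[U]$ with one generator in each Alexander degree, controlled precisely by the enumerating functions $\Gamma_K(\cdot)$ and $\Gamma_L(\cdot)$. Since $\nu^+(J)$ is by definition the smallest index $i$ with $V_i(J)=0$, and since the $V_i$ are computed from the filtered chain homotopy type of $\CFK(J)$, my first step is to set $J = K\#\overline L$ and compute $V_0(J)$ directly, because I expect $\nu^+(J)$ to be governed by $V_0$ once the relevant shift is understood.

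First I would recall that for a knot $J$ one has $V_0(J) = \min_x A(x)_+$-type data, more precisely that $V_i(J)$ measures how far down the $U$-tower one must go before the large-surgery correction term stabilises. The concrete route is to use the tensor product $\rCFK(K)\otimes_{\F[U]}\CFK(\overline L)$, as in the worked example in the excerpt: this produces a (possibly twisted) staircase whose generators' bidegrees are determined by the two enumerating functions. Mirroring sends $\CFK(L)$ to $\CFK(\overline L)$ by reversing arrows and negating gradings, so the Alexander degrees of the $\overline L$-generators are read off from $\Gamma_L$ with a sign change. Tracking the Alexander grading of the homogeneous $\F[U]$-basis elements of the tensor product, the top generator of the tower of the connected sum sits in Alexander degree $-g(K)+g(L) - \min_n\{\Gamma_L(n)-\Gamma_K(n)\}$, equivalently $\max_n\{\Gamma_L(n)-\Gamma_K(n)\} - g(K)+g(L)$ after accounting for the $g(L)$ shift built into the enumerating-function convention $\Gamma_L(n)=g(L)-A(U^n x)$.

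The heart of the argument is then the identity relating $V_0$ of an L-space-type complex to the Alexander grading at which the bottom of the surviving tower lives. I would establish that $V_0(J)$ equals the largest value of $(g(K)-A - (\text{shift}))$ encountered along the staircase, which after substituting the bidegree computation becomes exactly $\left(g(K)-g(L)+\max_{n\ge 0}\{\Gamma_L(n)-\Gamma_K(n)\}\right)_+$. The truncation $(\cdot)_+$ appears because $V_i\in\N$ is non-negative by the cited theorem of Rasmussen and Ni–Wu, so when the bracketed quantity is negative we get $V_0=0$ and hence $\nu^+=0$. Finally I would argue that $\nu^+(J)=V_0(J)$ in this setting, rather than some $V_i$ with $i>0$: this follows because the monotonicity $V_{i+1}(J)\le V_i(J)$ from the same theorem, combined with the staircase structure forcing the $V_i$ to decrease by exactly one until they reach zero, means that the index at which $V_i$ first vanishes coincides numerically with $V_0(J)$ itself.

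The main obstacle I anticipate is the careful bookkeeping of the Alexander and Maslov gradings through the tensor product and the mirroring operation, and in particular verifying that the maximum over $n$ of $\Gamma_L(n)-\Gamma_K(n)$ really is the quantity that surfaces as the relevant grading of the tower generator. The reduction to $V_0$ (as opposed to a genuine $V_i$) also requires knowing that the tensor product of a reduced L-space complex with a mirror staircase still has the \emph{single-step} behaviour $V_i - V_{i+1}\in\{0,1\}$ forcing $\nu^+=V_0$; I would extract this either from the explicit staircase combinatorics or by appealing directly to the reduced-complex formalism of Krcatovich, and it is here that the argument is most likely to need genuine care rather than routine computation.
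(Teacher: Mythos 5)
Your setup coincides with the paper's: form $\rCFK(K)\otimes_{\F[U]}\CFK(\overline L)$, identify the tower, and track Alexander gradings through the mirroring and the tensor product. But the two pillars of your reduction are both false, and they fail already in the simplest case covered by the theorem. Take $L$ to be the unknot and $K=T_{3,4}$, so that $K\#\overline L=K$. The right-hand side of the theorem is then $g(K)=3$ (since $\Gamma_L(n)-\Gamma_K(n)=n-\Gamma_K(n)\le 0$, the maximum is $0$, attained at $n=0$), and indeed $\nu^+(T_{3,4})=3$; but $V_0(T_{3,4})=1$, the full sequence being $(V_0,V_1,V_2,V_3)=(1,1,1,0)$ (for an L-space knot the $V_i$ are the torsion coefficients of the Alexander polynomial). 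So your intermediate claim that $V_0(J)$ equals $\left(g(K)-g(L)+\max_{n\ge0}\{\Gamma_L(n)-\Gamma_K(n)\}\right)_+$ is wrong, and your final claim that the staircase structure forces $V_{i+1}=V_i-1$ until the sequence vanishes --- hence $\nu^+(J)=V_0(J)$ --- is wrong for the very same complex: the $V_i$ of a staircase can stay constant for several consecutive indices, and Rasmussen/Ni--Wu monotonicity only gives $V_i-1\le V_{i+1}\le V_i$. Since both statements are false, the proof cannot be completed along these lines; in general $\nu^+(K\#\overline L)$ is strictly larger than $V_0(K\#\overline L)$.

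The repair is to avoid computing the value of $V_0$ altogether, which is what the paper does. By definition $\nu^+(J)$ is the least $k$ with $V_k(J)=0$, so what is needed is a \emph{vanishing criterion} for each $V_k$, not its value: $V_k(J)=0$ if and only if the inclusion into $\CFK(J)$ of the Alexander sublevel $A^-_k$ (generated by elements of filtration level at most $k$) induces a surjection onto the tower in homology. In the tensor product the tower is generated by the explicit cycle $z=\sum_k x_0\otimes U^{a'_k}y_k$ running over the corners of the twisted staircase, and its maximal Alexander degree works out, after the grading bookkeeping you describe plus the symmetry of the staircase, to be $M=g(K)-g(L)+\max_{n\ge0}\{\Gamma_L(n)-\Gamma_K(n)\}$. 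Then for $k\ge M$ the whole cycle $z$ lies in $A^-_k$, so $V_k=0$; while for $0\le k<M$ the sublevel $A^-_k$ misses at least one corner of $z$, and one checks --- using that $\rCFK(K)$ has at most one generator in each Alexander degree, so there is no alternative representative of the tower in lower filtration --- that the inclusion is not surjective onto the tower, so $V_k>0$. This yields $\nu^+(J)=(M)_+$ directly, with no control (and no need for control) over the actual positive values $V_k$, which, as the example above shows, are genuinely smaller than $\nu^+$ in general.
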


Notice that, since algebraic knots are $L$-space knots, \cref{t:maincomputation} is an immediate corollary. \cref{t:maincompL} will in turn be a consequence of the following proposition.

\begin{prop}\label{p:computenu}
In the notation of \cref{t:maincompL}, let $\{0=a_1 < \dots < a_d=g(L)\}$ be the image of the function $n\mapsto\Gamma_L(n)-n$, and define $a'_k = g(L)-a_{d+1-k}$ for $k=1,\dots,d$.
Then
\[
\nu^+(K\#\overline{L}) = \left(g(K)-g(L) + \max_{1\le k \le d}\{a_k + a'_k - \Gamma_K(a'_k)\}\right)_+.
\]
\end{prop}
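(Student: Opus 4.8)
The plan is to compute $\nu^+(K\#\overline L)$ directly from a knot Floer complex, exploiting that $\nu^+$ (equivalently the family $\{V_i\}$) is a filtered chain homotopy invariant. Krcatovich's reduction makes the connected sum tractable: since $\CFK(K\#\overline L)$ is filtered homotopy equivalent to $\rCFK(K)\otimes_{\F[U]}\CFK(\overline L)$, I would work with this model. Here $\rCFK(K)=\F[U]\,x$ is free of rank one with trivial differential, and its only nonstandard feature is the Alexander filtration: the generator $U^n x$ sits in filtration level $g(K)-\Gamma_K(n)$, so that multiplication by $U$ drops the Alexander grading by $\Gamma_K(n+1)-\Gamma_K(n)$ rather than by $1$. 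The second factor $\CFK(\overline L)$ is the dual of the staircase complex of the L-space knot $L$.

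First I would make the staircase of $L$ explicit and read off its corners. The successive Alexander gradings of the staircase generators are governed by the consecutive differences of $\Gamma_L$, so the convex (outer) corners of the staircase are recorded by the distinct values $a_1<\dots<a_d$ of the nondecreasing function $n\mapsto\Gamma_L(n)-n$ together with their dual values $a'_k=g(L)-a_{d+1-k}$; the identity $a'_k=\min\{n:\Gamma_L(n)-n=a_k\}$, a manifestation of the symmetry of the numerical semigroup of a singularity, is the combinatorial fact linking the two indexings. Dualizing turns these outer corners of $\CFK(L)$ into the corners of $\CFK(\overline L)$ that the tower of $K\#\overline L$ must negotiate.

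Next I would compute $\nu^+$ from the tensor product. Tensoring the twisted $K$-staircase with the dual $L$-staircase produces an explicit twisted staircase, exactly as in the example computed in \cref{s:prelim}. The generator of the $U$-tower is $x\otimes\theta$, with $\theta$ the bottom generator of the dual staircase (Alexander grading $-g(L)$), so a naive representative lies in Alexander filtration $g(K)-g(L)$. The nontrivial differential of $\CFK(\overline L)$ lets one push a tower-generating cycle downward in filtration, but at each corner $k$ the descent is obstructed by the twist of $K$: realizing the cycle there costs a filtration contribution $\Gamma_K(a'_k)$ against the available budget $a_k+a'_k$. Since a cycle must close up at every corner simultaneously, the minimal Alexander filtration level of a tower-generating cycle is the maximum over the corners, namely $g(K)-g(L)+\max_{1\le k\le d}\{a_k+a'_k-\Gamma_K(a'_k)\}$. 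Recalling that $\nu^+(K\#\overline L)=\min\{s\ge 0:V_s=0\}$ is the positive part of this minimal level yields the stated formula, with the truncation $(\cdot)_+$ accounting for the case in which the tower is already realized in nonpositive filtration (e.g.\ $K=L$, where every corner contributes $\Gamma_L(a'_k)-\Gamma_K(a'_k)=0$).

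The main obstacle is the bookkeeping forced by the nonstandard $U$-action on $\rCFK(K)$: because $U$ does not drop the Alexander grading uniformly, one cannot reason in the usual planar $CFK^\infty$ picture, and the tensor product must be re-expanded as an honest filtered complex before the invariants can be read off. Getting the per-corner contribution exactly right — in particular, that the relevant cost at the $k$-th corner is $\Gamma_K(a'_k)$ rather than some other evaluation of $\Gamma_K$, and that the optimization ranges over the $d$ corners rather than over all $n$ — is the delicate point; the semigroup symmetry $a'_k=\min\{n:\Gamma_L(n)-n=a_k\}$ is what ultimately reconciles this corner-indexed maximum with the cleaner $\max_{n\ge 0}\{\Gamma_L(n)-\Gamma_K(n)\}$ of \cref{t:maincompL}.
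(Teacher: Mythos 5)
You follow the same route as the paper: Krcatovich's model $\rCFK(K)\otimes_{\F[U]}\CFK(\overline L)$, the corner data $a_k,a'_k$ of the dual staircase, and the per-corner filtration levels $g(K)-g(L)+a_k+a'_k-\Gamma_K(a'_k)$, with $\nu^+$ read off as the positive part of their maximum. But your central homological claim is wrong as stated: the generator of the $U$-tower is \emph{not} $x\otimes\theta$. Dualizing a staircase puts the nontrivial differential on the duals of the \emph{outer} corners, so $\theta$ (the dual of the top-Alexander generator of the staircase of $L$, sitting at Alexander grading $-g(L)$) satisfies $\partial\theta\neq 0$ whenever $d>1$; the element $x\otimes\theta$ is not even a cycle, and there is no ``naive representative'' in filtration level $g(K)-g(L)$. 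The tower of $H_*\bigl(\rCFK(K)\otimes\CFK(\overline L)\bigr)\cong\F[U]$ is generated by the class of the chain $z=\sum_{k=1}^{d}U^{a'_k}(x\otimes y_k)$, whose $d$ summands sit at the $d$ corners; your $x\otimes\theta$ is only its $k=1$ summand. Your later phrase ``a cycle must close up at every corner simultaneously'' tacitly replaces the false claim by the correct one, so the write-up is internally inconsistent exactly where the computation begins.

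Moreover, the step that actually needs proof --- the lower bound --- is only asserted. Exhibiting $z$ in filtration level $M=g(K)-g(L)+\max_k\{a_k+a'_k-\Gamma_K(a'_k)\}$ gives only $\nu^+\le (M)_+$; to conclude $V_j>0$ for $j<M$ you must rule out that some \emph{other} cycle, lying in a lower filtration sublevel $A^-_j$, also generates the tower, and ``must close up at every corner'' is the conclusion here, not an argument. The reason it holds is structural: the differential of $\rCFK(K)\otimes\CFK(\overline L)$ takes values in the $\F[U]$-span of the duals of the \emph{inner} corners, so adding a boundary to $z$ can never cancel any of the summands $U^{a'_k}(x\otimes y_k)$; hence every cycle homologous to $z$ still contains the summand realizing the maximum $M$ and cannot lie in $A^-_j$ for $j<M$. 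This is exactly the point the paper makes (citing Krcatovich's analysis of these tensor products) when it says that $A^-_j$ missing a generator of the chain obstructs surjection onto the tower; without it, your proposal establishes only one of the two inequalities.
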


\begin{proof}
Call $\delta_K = g(K)$, $\delta_L = g(L)$.
Consider the complex $\rCFK(K)\otimes_{\F[U]}\CFK(\overline L)$, that computes the knot Floer homology of $K\#\overline L$. 
Recall that the function $\Gamma_K(\cdot)$ describes the reduced Floer complex: $\rCFK(K)$ has a generator $x_k$ in each bidegree
$(-k,\delta_K-\Gamma_K(k))$.
Moreover, $U\cdot x_k = x_{k+1}$.

As observed by Krcatovich \cite[Section 4]{Krcat}, the sequences $\{a_k\}, \{a_k'\}$ determine a `twisted staircase' knot Floer complex $\CFK(\overline L)$ for $\overline L$: the generator of the tower $\F[U]$ in $\HFK(\overline L)$ is represented by the sum of $d$ generators $U^{a'_1}y_1,\dots,U^{a'_d}y_d$, where $y_k$ sits in bidegree $(0, a'_k+a_k-\delta_L)$.
In more graphical terms, $a_k$ will be the Alexander grading of $U^{a_k'}y_k$, i.e. its $j$--coordinate, and $-a_k'$ will be its $i$--coordinate.

The tensor product $\rCFK(K)\otimes\CFK(\overline L)$ has a staircase in Maslov grading 0 generated by the chain $z = \sum_{k=1}^d x_0\otimes U^{a_k'}y_k$.
Notice that $x_0\otimes U^{a_k'}y_k = x_{a_k'}\otimes y_k$ sits in Alexander degree $A(x_{a_k'}) + A(y_k) = \delta_K - \Gamma(a_k') + a_k+a_k' - \delta_L$.
Therefore, the maximal Alexander degree in the chain $z$ is precisely $M = \delta_K-\delta_L+\max\{a_k+a'_k - \Gamma_K(a_k)\}$, and we claim that if $M\ge 0$, then $\nu^+(K\#\overline L) = M$.

We let $A^-_k$ be the filtration sublevel of $C_\# = \rCFK(K)\otimes\CFK(\overline L)$ defined by $j\le k$, i.e. generated by all elements with Alexander filtration level at most $k$.

If $M \le k$, the entire staircase is contained in the subcomplex $A^-_k$. That is, the inclusion $A^-_k \to C_\#$ induces a surjection onto the tower, hence $\nu^+(K\#\overline L) \le k$.
In particular, if $M\le 0$, then $\nu^+(K\#\overline L) = 0 = (M)_+$.

If $M>0$, for each $k<M$ the complex $A^-_k$ misses at least one of the generators of the chain; this implies that the inclusion $A^-_k \to \CFK(K)$ does not induce a surjection onto the tower. 
It follows that $V_k(K\#\overline L) > 0$.
Hence, by definition of $\nu^+$, we have $\nu^+(K\#\overline L) = M = (M)_+$, as desired.
\end{proof}

\begin{proof}[Proof of \cref{t:maincompL}]
As remarked above, the values of $a_k$ and $a_k'$ determine the positions of the generators in the staircase. By the symmetry of the Alexander polynomial (and hence of the staircases), $\Gamma_L(a_k')-a_k' = a_k$ for each $k$ (compare with \cite[Section 4]{Krcat}).

Moreover, for any $a_k' \leq n < a_{k+1}'$, we have $\Gamma_L(n)-n = a_k$, and for every $a_d'\le n$ we have $\Gamma_L(n)-n = a_d$.
Furthermore, as $\Gamma_K(\cdot)$ is strictly increasing, $n \mapsto \Gamma_K(n)-n$ is non-decreasing, therefore for any $a_k' \leq n < a_{k+1}'$ we have $\Gamma_K(a_j')-a_j' \leq \Gamma_K(n)-n$, so
\begin{align*}
a_k + a_k' - \Gamma_K(a_k') &= (\Gamma_L(a_k') - a_k') - (\Gamma_K(a_k') - a_k') =\\
&= \max_{a_k' \leq n < a_{k+1}'} \{ (\Gamma_L(n) - n) - (\Gamma_K(n) - n) \} = \max_{a_k' \leq n < a_{k+1}'} \{ \Gamma_L(n) - \Gamma_K(n) \}.\qedhere
\end{align*}
\end{proof}

\begin{rmk}
The same argument shows that, for every $m\le V_0(K\#\overline L)$:
\[
\min\{i\mid V_i(K\#\overline L)=m\} = \left(g(K)-g(L) + \max_{n\ge0}\{\Gamma_L(n) - \Gamma_K(n+m)\}\right)_+,
\]
thus allowing one to compute all correction terms of $K\#\overline L$ from the enumerating functions of $K$ and $L$.
\end{rmk}

\section{Semicontinuity of the semigroups}\label{s:semicont}

In this section we prove \cref{t:semicont} about the deformations of plane curve singularities.
We note here that our \cref{t:semicont} differs slightly from both of the results mentioned in the introduction: it reproves \cite[Prop. 4.5.1]{GorskyNemethi} in the special case when both the central and the perturbed singularity are irreducible, but (in the spirit of \cite{BorodzikLivingston-def}) using only smooth topological (not analytic) methods; however, we do not restrict ourselves to $\delta$--constant deformations, as opposed to \cite[Theorem 2.16]{BorodzikLivingston-def}.

In the context of deformations, inequalities which hold for certain invariants are usually referred to as \emph{semicontinuity} of that particular invariant. Our result can be viewed as the semicontinuity of the semigroups (resembling the spectrum semicontinuity, cf. also \cite[Section 3.1.B]{BorodzikLivingston-def}).

For a brief introduction to the topic of deformations, we follow mainly \cite[Section 1.5]{BorodzikLivingston-def} and adapt the notions and definitions from there.
By a \emph{deformation} of a singularity with link $K$ we mean an analytic family $\{F_s\}$ of polynomials parametrised by $|s|<1$, such that there exists a ball $B\subset \C^2$ with the following properties:
\begin{itemize}
\item the only singular point of $F_0$ inside $B$ is at the origin;
\item $\{F_s=0\}$ intersects $\partial B$ transversely and $\{F_s=0\}\cap \partial B$ is isotopic to $K$ for every $|s|<1$;
\item the zero set of $F_s$ has only isolated singular points in $B$ for every $|s|<1$;
\item all the singular points of $F_s$ inside $B$ are irreducible for every $|s|<1$;
\item all fibres $F_s$ with $s\neq 0$ have the same collection of local analytic type of singularities.
\end{itemize}

For simplicity, we also assume that there is only one singular point of $F_s$ inside $B$ for each $s$. If such an analytic family of polynomials $\{F_s\}$ exists, we say that the singularity of $F_0$ at the origin \emph{has a deformation} to the singularity of $F_{1/2}$.

Consider now a sufficiently small ball $B_2$ around the singular point of $F_{s_0}$ for a fixed $0<|s_0|<1$
such that $\{F_{s_0} = 0\} \cap \partial B_2$ is isotopic to $L$, the link of the perturbed singular point. Then $V = \{F_{s_0} = 0\} \cap \overline{B} \setminus B_2$ is a genus-$g$ cobordism between $K$ and $L$, where $g = g(K) - g(L)$.

Let $K$, $L$ be two L-space knots, with corresponding semigroups $\Gamma_K$ and $\Gamma_L$, respectively. 
We define the \emph{semigroup counting functions} $R_{K}, R_{L}: \N\to\N$ as
$R_{K}(n) = \# [0,n) \cap \Gamma_K$ and $R_{L}(n) = \# [0,n) \cap \Gamma_L$.
For simplicity, we allow $n$ to run on negative numbers as well: if $n < 0$, then we define $R_{K}(n) = R_{L}(n) = 0$.  In this section, we will assume that $g(K) = \delta_K \ge \delta_L = g(L)$.

\begin{prop}\label{prop:semi}
Assume there is a genus-$g$ cobordism between two L-space knots $K$ and $L$. Then for any $a \in \mathbb{Z}$ we have
\[ R_{K}(a+\delta_K) \leq R_{L}(a+\delta_L+g). \]
\end{prop}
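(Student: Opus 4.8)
The plan is to reduce Proposition~\ref{prop:semi} to the computation of $\nu^+(K\#\overline L)$ provided by Theorem~\ref{t:maincompL}. The genus-$g$ cobordism between $K$ and $L$ produces, by carving along an arc, a genus-$g$ slice surface for the connected sum $K\#\overline L$; since $\nu^+$ bounds the slice genus from below (this is the basic concordance-type property of $\nu^+$ coming from the correction-term surgery formula \eqref{e:NiWu}), we obtain the inequality $\nu^+(K\#\overline L)\le g$. Combining this with the explicit formula of Theorem~\ref{t:maincompL}, and using $g=\delta_K-\delta_L$ together with the standing assumption $\delta_K\ge\delta_L$, I would read off
\[
\max_{n\ge 0}\{\Gamma_L(n)-\Gamma_K(n)\}\le g - (g(K)-g(L)) = g-(\delta_K-\delta_L).
\]
Actually the cleaner route is to observe directly that the formula gives $\nu^+(K\#\overline L)=\big(\max_n\{\Gamma_L(n)-\Gamma_K(n)\}\big)_+$ once $g(K)-g(L)=\delta_K-\delta_L$ is substituted, so $\nu^+\le g$ becomes the inequality $\Gamma_L(n)-\Gamma_K(n)\le g$ for all $n$, i.e.\ $\Gamma_K(n)\ge \Gamma_L(n)-g$.

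The second and main step is purely combinatorial: I would translate the enumerating-function inequality $\Gamma_K(n)\ge\Gamma_L(n)-g$ into the counting-function inequality $R_K(a+\delta_K)\le R_L(a+\delta_L+g)$. The key is that the counting function $R$ and the enumerating function $\Gamma$ are essentially inverse to each other: $R_K(m)$ counts how many semigroup elements of $\Gamma_K$ lie strictly below $m$, which equals $\min\{n : \Gamma_K(n)\ge m\}$ (adjusting for the convention $\Gamma_K(0)=0$), while $\Gamma_K(n)$ is the $n$-th element. Concretely, the statement $R_K(m)\le R_L(m')$ for appropriate thresholds is equivalent, under this Galois-type duality, to an inequality of the form $\Gamma_K(n)\ge\Gamma_L(n)-g$ evaluated at the right index. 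I would make the bookkeeping precise by fixing $a\in\Z$, setting $n=R_K(a+\delta_K)$ so that $\Gamma_K(n-1)<a+\delta_K\le\Gamma_K(n)$ is no longer needed once the inverse relation is installed, and then chasing the inequality $\Gamma_K(n)\ge\Gamma_L(n)-g$ through to the desired bound on $R_L$.

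I expect the main obstacle to be the careful handling of the index shifts and the off-by-one issues created by the conventions $\Gamma_K(0)=0$, the half-open intervals $[0,n)$, and the extension $R_K(n)=R_L(n)=0$ for $n<0$. In particular the substitution $g=\delta_K-\delta_L$ interacts with the two different ``origins'' $\delta_K$ and $\delta_L$ for the two semigroups, and one must verify that the thresholds $a+\delta_K$ and $a+\delta_L+g=a+\delta_K$ actually align correctly — note that $\delta_L+g=\delta_K$, so the inequality to be proved is really $R_K(a+\delta_K)\le R_L(a+\delta_K)$, and the content is entirely in comparing the two counting functions at a common threshold. This alignment both simplifies the statement and isolates where the inequality $\Gamma_K(n)\ge\Gamma_L(n)-g$ is used. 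The remaining verification that the slice-genus bound $\nu^+\le g$ holds — i.e.\ that a genus-$g$ cobordism yields a genus-$g$ slice surface for $K\#\overline L$ and that $\nu^+$ is bounded above by the slice genus — is standard and I would quote it from the surgery formula rather than reprove it.
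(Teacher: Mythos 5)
Your overall strategy is the same as the paper's: combine the genus bound $\nu^+(K\#\overline L)\le g$ with the formula of \cref{t:maincompL}, then translate the resulting inequality between enumerating functions into one between the counting functions $R_K$, $R_L$. Your first displayed inequality, $\max_{n\ge 0}\{\Gamma_L(n)-\Gamma_K(n)\}\le g-(\delta_K-\delta_L)$, is correct and is precisely the paper's key step. But the ``cleaner route'' you then switch to contains an algebra error that breaks everything downstream: substituting $g(K)-g(L)=\delta_K-\delta_L$ into \cref{t:maincompL} does not remove that term. The formula reads $\nu^+(K\#\overline L)=\left(\delta_K-\delta_L+\max_{n\ge0}\{\Gamma_L(n)-\Gamma_K(n)\}\right)_+$, so $\nu^+\le g$ yields $\Gamma_L(n)-\delta_L-g\le\Gamma_K(n)-\delta_K$ for all $n$, not the weaker $\Gamma_K(n)\ge\Gamma_L(n)-g$ that you carry into the combinatorial step.

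The second, more serious, problem is that you assume $g=\delta_K-\delta_L$ (``note that $\delta_L+g=\delta_K$''), which is not a hypothesis of \cref{prop:semi}: the cobordism genus $g$ there is arbitrary, and this generality is genuinely used later (e.g.\ in the restatement $V_m(K)\le V_{m+g}(L)+g$ after \cref{l:gvsnu}); only the application to deformations in \cref{t:semicont} has $g=\delta_K-\delta_L$. Moreover, even in that special case your two steps do not fit together: there the correct consequence of $\nu^+\le g$ is $\Gamma_L(n)\le\Gamma_K(n)$ for all $n$, which does translate into $R_K(x)\le R_L(x)$; whereas your inequality $\Gamma_K(n)\ge\Gamma_L(n)-g$ only translates into $R_K(x)\le R_L(x+g)$, strictly weaker than the aligned statement $R_K(a+\delta_K)\le R_L(a+\delta_K)$ you reduced to. The repair is to discard the specialisation and run the index chase with your first (correct) inequality in the normalised form $\Gamma_L(m)-\delta_L-g\le\Gamma_K(m)-\delta_K$: setting $m=R_L(a+\delta_L+g)$ gives $\Gamma_L(m)\ge a+\delta_L+g$, hence $\Gamma_K(m)\ge\Gamma_L(m)-\delta_L-g+\delta_K\ge a+\delta_K$, so $R_K(a+\delta_K)\le m$. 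That is exactly the paper's proof.
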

\begin{proof}
Since $\nu^{+}$ is a lower bound for the cobordism genus, by \cref{t:maincomputation} for any $m \in \N$ we have
\[ \delta_K - \delta_L + \Gamma_L(m) - \Gamma_K(m) \leq g, \]
equivalently,
\[ \Gamma_L(m) - \delta_L - g \leq \Gamma_K(m) - \delta_K. \]
Notice that since $\Gamma_K(m) = a$ implies $R_{K}(a) = m$, and the largest $a$ for which $R_{K}(a) = m$ is $a = \Gamma_K(m)$ (and analogously for $\Gamma_L$), the above inequality can be interpreted as
\[ R_{K}(a+\delta_K) \leq R_{L}(a+\delta_L+g).\qedhere \]
\end{proof}

The proposition above should be compared with \cite[Theorem 2.14]{BorodzikLivingston-def}. In \cite{BorodzikLivingston-def}, Borodzik and Livingston introduced the concept of \emph{positively self-intersecting concordance}, and \cite[Theorem 2.14]{BorodzikLivingston-def} is the counterpart of \cref{prop:semi} above: their assumption is on the double point count of the positively self-intersecting concordance, while ours is on the cobordism genus. The former is related to the 4--ball crossing number considered in \cref{p:concordanceinvariants}.

The assumption in \cite{BorodzikLivingston-def} allowed Borodzik and Livingston to treat $\delta$--constant deformations (because irreducible singularities can be perturbed to transverse intersections). However, equipped with \cref{prop:semi}, we can prove the semigroup semicontinuity even if the deformation is not $\delta$--constant (but assuming that there is only one singularity in the perturbed curve $\{F_{1/2} = 0\}$).

\begin{proof}[Proof of \cref{t:semicont}]
Apply \cref{prop:semi} with $a=n-\delta_K$ and recall that $g = \delta_K-\delta_L$ in this case. 
\end{proof}

\begin{rmk}
In \cite[Section 3]{BorodzikLivingston-def}, the example of torus knots $T_{6,7}$ and $T_{4,9}$ was extensively studied.
The semigroup semicontinuity proved in \cref{t:semicont} obstructs the existence of a deformation between the corresponding singularities.
Since the difference of the $\delta$--invariants is 3, a deformation from $T_{6,7}$ to $T_{4,9}$ would produce a genus-3 cobordism between the two knots. However, the bound coming from $\nu^{+}$ is $4$ (compare with \cite[Remark 3.1]{BorodzikLivingston-def}).
\end{rmk}


\section{Bounds on the slice genus and concordance unknotting number}\label{s:cob}

Recall that $\nu^+(K) \le g_*(K)$ for every knot $K$;
as outlined in the introduction, this shows that $\nu^+(K\#\overline{L})$ gives a lower bound on genus of cobordisms between $K$ and $L$.
Notice that $\nu^+(L\#\overline K)$ gives a bound, too, and the two bounds are often different.

We now state a preliminary lemma that we will use to prove \cref{p:crossingchanges}, i.e. that trading a negative crossing for a positive one does not decrease $\nu^+$, nor does it increase it by more than 1.

\begin{lemma}\label{l:gvsnu}
If there is a genus-$g$ cobordism between two knots $K$ and $L$, then for each $m\ge0$ the following holds:
\[
V_{m+g}(K) \le V_m(L).
\]
As a consequence, $\nu^+(K) \le \nu^+(L)+g$.
\end{lemma}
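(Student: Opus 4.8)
The plan is to extract the inequality $V_{m+g}(K)\le V_m(L)$ from the surgery formula \eqref{e:NiWu}, exploiting the fact that a genus-$g$ cobordism between $K$ and $L$ produces, after carving, a genus-$g$ slice surface for $\overline K\#L$ sitting inside $S^3\times[0,1]$. The guiding principle is that the $V_i$'s record correction terms of large surgeries, and correction terms are monotone under negative-definite (or more generally appropriately signed) cobordisms. So the first step is to turn the abstract cobordism into a concrete four-manifold relating surgeries on $K$ and on $L$.

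Concretely, I would proceed as follows. First I would fix a large integer $n$ (eventually $n\to\infty$), and consider the surgery three-manifolds $S^3_n(K)$ and $S^3_n(L)$. The cobordism $F\subset S^3\times[0,1]$ of genus $g$ can be capped off and tubed so as to build a cobordism $W$ between $S^3_n(K)$ and $S^3_n(L)$ (or between suitable surgeries with shifted framings), whose intersection form and $b_2^+$ are controlled by $g$. The key numerical input is that the cobordism $F$ of genus $g$ gives rise, via the standard handle picture of surgery, to a second homology class whose self-intersection and the adjunction-type inequality let one compare the maximal correction terms. Then I would invoke the monotonicity of $d$-invariants under such cobordisms, in the form
\[
d(S^3_n(K),\ft_i) \le d(S^3_n(L),\ft_{i-g}) + (\text{topological correction term}),
\]
where the correction term is exactly the difference of the quadratic expressions $\frac{(n-2i)^2-n}{4n}$ appearing in \eqref{e:NiWu}, shifted by $g$ in the spin$^c$ index. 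Plugging \eqref{e:NiWu} into both sides and cancelling the quadratic terms isolates the desired inequality $\max\{V_{i}(K),V_{n-i}(K)\}\ge \max\{V_{i-g}(L),V_{n-i+g}(L)\}$; specialising $i=m+g$ and letting $n$ be large (so that $V_{n-i}$ and $V_{n-i+g}$ vanish, since the $V$'s are eventually $0$) collapses the maxima to the single terms $V_{m+g}(K)\ge \cdots$, giving $V_{m+g}(K)\le V_m(L)$ after a sign check.

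The consequence $\nu^+(K)\le\nu^+(L)+g$ is then immediate: by definition $\nu^+(L)$ is the least index $m$ with $V_m(L)=0$; taking $m=\nu^+(L)$ in the inequality $V_{m+g}(K)\le V_m(L)=0$ and using that the $V_i(K)$ are non-negative (from the theorem of Rasmussen--Ni--Wu) forces $V_{\nu^+(L)+g}(K)=0$, whence $\nu^+(K)\le\nu^+(L)+g$.

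The main obstacle I anticipate is the careful bookkeeping of the spin$^c$ labelling and of the sign/orientation of the cobordism: a genus-$g$ cobordism between $K$ and $L$ must be oriented and capped so that the resulting four-manifold has the negative-definiteness (or the correct $b_2^+=0$) property needed for the $d$-invariant inequality to point the right way, and the shift by $g$ in the spin$^c$ index $\ft_i\mapsto\ft_{i-g}$ has to be matched exactly against the genus so that the quadratic terms in \eqref{e:NiWu} cancel cleanly. An alternative, possibly cleaner, route that avoids some of this geometry is to argue purely at the level of knot Floer complexes: a genus-$g$ cobordism induces, after carving, a slice surface for $\overline K\#L$, and one can track how the $V$-invariants of $K$ and $L$ relate through the filtered maps on $\CFK$ induced by the cobordism, using the fact that $V_m(K)$ is computed from the map $A^-_m\to C$ on the full complex. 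If the direct surgery argument proves delicate, I would fall back on this complex-level description, where the genus-$g$ shift appears simply as a shift of the Alexander filtration level by $g$.
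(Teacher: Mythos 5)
Your framework---large integer surgeries, the formula \eqref{e:NiWu}, and monotonicity of correction terms under suitably signed cobordisms---is the right arena, and your deduction of $\nu^+(K)\le\nu^+(L)+g$ from the inequality $V_{m+g}(K)\le V_m(L)$ matches the paper. But the core step, the inequality $d(S^3_n(K),\ft_i)\le d(S^3_n(L),\ft_{i-g})+(\text{quadratic correction})$, is \emph{invoked} as ``monotonicity of $d$-invariants'' when no such statement exists to be cited: it is essentially equivalent to the lemma itself. The available monotonicity results (Ozsv\'ath--Szab\'o for negative definite cobordisms, Behrens' Theorem 4.1 for negative semidefinite ones) do not produce a spin$^c$ shift by $g$; how the genus enters is precisely the difficulty. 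Worse, the most natural 4--manifold joining $S^3_n(K)$ to $S^3_n(L)$ --- regluing a twisted copy of $F\times D^2$ along the cobordism surface $F$ --- contains $g$ hyperbolic pairs of classes represented by tori of the form $(\text{curve in } F)\times S^1$, so it has $b_2^+=g>0$, and the negative-(semi)definite machinery simply does not apply to it; your assertion that the intersection form and $b_2^+$ are ``controlled by $g$'' in the way you need is exactly what fails. The paper's key step is genuinely different: it cones off the $K$--end to obtain a \emph{singular} genus-$g$ surface $\widehat F\subset W'=W\cup B^4$, identifies the boundary of a regular neighbourhood $N$ of $\widehat F$ with the manifold $Y_n$ given by $n$--surgery on $K\# K_{B,g}$ inside $\#^{2g}(S^2\times S^1)$ (the Borromean knot), imports the computation of the bottom-most correction terms $d_b(Y_n,\ft_m)=\min_{0\le k\le g}\{2k-g-2V_{m+g-2k}(K)\}-\cdots$ from \cite{us,BorodzikHeddenLivingston}, and applies Behrens' theorem to $Z=-(W'\setminus N)$, which \emph{does} have $b_2^+=0$. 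There the genus enters through the $\min$ formula and through the $b_1$--correction $+g$ in Behrens' inequality (the additive $-g$ and $+g$ cancel), not as a relabelling of spin$^c$ structures with clean cancellation of quadratic terms.

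There is also a concrete directional error. Substituting \eqref{e:NiWu} into your claimed inequality gives, as you yourself write, $\max\{V_{i}(K),V_{n-i}(K)\}\ge \max\{V_{i-g}(L),V_{n-i+g}(L)\}$, which for $i=m+g$ and $n$ large collapses to $V_{m+g}(K)\ge V_m(L)$ --- the \emph{opposite} of the lemma; no ``sign check'' repairs this. To conclude $V_{m+g}(K)\le V_m(L)$ you need the correction term of the surgery on $K$ bounded from \emph{below} by that of the surgery on $L$ plus the correction, and establishing the inequality in that direction is not bookkeeping: it is dictated by the geometry of the 4--manifold you construct (which end the negative-semidefinite piece points at), i.e. by the very construction your proposal leaves unspecified. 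Your fallback --- filtered maps on $\CFK$ induced by a cobordism, with Alexander shift controlled by $g$ --- is likewise not an off-the-shelf tool; making it precise is a theorem in its own right. So, as written, the proposal has a genuine gap at its central step.
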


This lemma can be compared with \cref{prop:semi}; indeed, using \cite[Equation (5.1)]{us}, the proposition can be restated as $V_m(K)\le V_{m+g}(L)+g$, for each $m\ge 0$.

\begin{proof}
Consider the 4--manifold $W$ obtained by attaching a 4--dimensional 2--handle to $S^3\times[0,1]$ along $L\times \{1\}\subset S^3\times \{1\}$, with framing $n\ge 2\nu^+(L)$.

The cobordism is a genus-$g$ embedded surface $F$ in $S^3\times[0,1]$, whose boundary components are $K\times\{0\}$ and $L\times\{1\}$.
Capping off the latter boundary component in $W$, and taking the cone over $(S^3\times\{0\},K)$ , we obtain obtain a singular genus-$g$ surface $\widehat F\subset W' = W \cup B^4$, whose only singularity is a cone over $K$.

As argued in \cite[Section 4]{us} and \cite[Theorem 3.1]{BorodzikHeddenLivingston}, the boundary $\partial N$ of a regular neighbourhood $N$ of $\widehat F$ in $W'$ is diffeomorphic to the 3--manifold $Y_n$ obtained as $n$--surgery along the connected sum of $K$ and the Borromean knot $K_{B,g}$ in $\#^{2g}(S^2\times S^1)$.
It follows that $Z = -(W'\setminus N$) can be looked at as a cobordism from $S^3_n(L)$ to $Y_n$.

\begin{figure}[h]
\includegraphics[scale=.7]{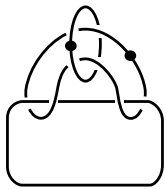}
\caption{The Borromean knot $K_{B,1}$. The Borromean knot $K_{B,g}$ is the connected sum of $g$ copies of $K_{B,1}$.}
\end{figure}

We can look at $N$ as the surgery cobordism from $\#^{2g}(S^2\times S^1)$ to $Y_n$, filled with a 1--handlebody;
since the class of $[\widehat F]$ generates both $H_2(N)$ and $H_2(-W')$, we obtain that the restriction of any \spinc structure on $-W'$ to $Z$ induces an isomorphism between (torsion) \spinc structure on its two boundary components that respects the surgery-induced labelling.
Moreover, we also obtain that $b_2^+(Z) = 0$.

The 3--manifold $Y_n$ has standard ${\rm HF}^\infty$ \cite{us, BorodzikHeddenLivingston}, and its bottom-most correction terms have been computed in \cite[Proposition 4.4]{us} and \cite[Theorem 6.10]{BorodzikHeddenLivingston}:
\[
d_b(Y_n,\ft_m) = \min_{0\le k\le g}\{2k-g-2V_{m+g-2k}(K)\} - \frac{n-(2m- n)^2}{4n}.
\]

We observe that choosing $k=0$ in the minimum we obtain the inequality:
\[
d_b(Y_n,\ft_m) \le -g-2V_{m+g}(K) - \frac{n-(2m-n)^2}{4n}.
\]

Applying the last inequality and \cite[Theorem 4.1]{Behrens} to $Z$, seen as a negative semidefinite cobordism from $S^3_n(L)$ to $Y_n$, we get:
\[
d(S^3_n(L),\ft_m) \le d_b(Y_n, \ft_m) + g,
\]

from which
\[
-2V_m(L) \le -g-2V_{m+g}(K)+g \Longleftrightarrow V_{m+g}(K) \le V_m(L).
\]

The last part of the statement now follows from the observation that $V_{\nu^+(L)+g}(K) \le V_{\nu^+(L)}(L) = 0$, hence $\nu^+(K) \le \nu^+(L)+g$ as desired.
\end{proof}

\begin{proof}[Proof of \cref{p:crossingchanges}]
The inequality $\nu^+(K_-) \le \nu^+(K_+)$ readily follows from \cite[Theorem 6.1]{BorodzikHedden}: the latter states that for each non-negative integer $n$ we have $V_n(K_-) \le V_n(K_+)$.
Applying the inequality with $n = \nu^+(K_+)$ we obtain $V_{\nu^+(K_+)}(K_-) \le 0$, hence $\nu^+(K_-) \le \nu^+(K_+)$, as desired.

The inequality $\nu^+(K_+) \le \nu^+(K_-)+1$ follows from \cref{l:gvsnu} above: in fact, there is a genus-$1$ cobordism from $K_-$ to $K_+$ obtain by smoothing the double point of the regular homotopy associated with the crossing change, and the previous lemma concludes the proof.
\end{proof}

We now turn to applications to other, more geometrically defined, concordance invariants, and we prove \cref{p:concordanceinvariants}.

\begin{proof}[Proof of \cref{p:concordanceinvariants}]
We need at least $\nu^+(K)$ negative crossing changes and at least $\nu^+(\overline K)$ positive crossing changes to turn $K$ into a knot $K_0$ such that $\nu^+(K_0) = \nu^+(\overline K_0) = 0$. In particular, we need to change at least $\nu^+(K) + \nu^+(\overline K)$ crossings to make $K$ slice, hence $u_s(K)\ge\nu^+(K) + \nu^+(\overline K)$.

As for the concordance unknotting number, one simply observes that $\nu^+(K)$ and $\nu^+(\overline K)$ are concordance invariants, hence every knot in the same concordance class of $K$ has unknotting number at least $\nu^+(K) + \nu^+(\overline K)$.

Finally, \cite[Proposition 2.1]{OwensStrle-unknotting} asserts that every immersed concordance can be factored into two concordances and a sequence of crossing changes. That is, given an immersed concordance from $K$ to the unknot with $c$ double points, there exist knots $K_0$ and $K_1$ such that $K_0$ is slice, $K_1$ is concordant to $K$, and there is a sequence of $c$ crossing changes from $K_0$ to $K_1$; from the proposition above, it follows that $c \ge \nu^+(K_0\#K_1) + \nu^+(\overline{K_0\#K_1}) = \nu^+(K)+\nu^+(\overline K)$.
\end{proof}

\section{Subadditivity of $\nu^+$}\label{s:subadd}

The goal of this section is proving \cref{t:subadd}. We start with a preliminary proposition.

\begin{prop}\label{p:subaddV}
For any two knots $K, L\subset S^3$ and any two non-negative integers $m,n$, we have
\[
V_{m+n}(K\#L) \le V_m(K) + V_n(L).
\]
\end{prop}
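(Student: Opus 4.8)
The plan is to read off $V_s$ from the large surgery formula as the $U$--power of a natural projection, and then exploit the connected-sum formula $\CFK(K\#L)\simeq \CFK(K)\otimes_{\F[U]}\CFK(L)$. Write $\mathrm{CFK}^\infty$ for the full doubly filtered knot Floer complex, with algebraic filtration $i$ (the $U$--power) and Alexander filtration $j$. The invariant $V_s(J)$ of a knot $J$ is computed from the subcomplex $A_s^-(J)=C\{i\le 0,\ j\le s\}$ together with $B^-(J)=C\{i\le 0\}\cong CF^-(S^3)$: the inclusion $v_s\colon A_s^-(J)\hookrightarrow B^-(J)$ is a chain map (the differential lowers both $i$ and $j$), and the induced map on the free $\F[U]$--summand of homology --- the \emph{tower} --- is multiplication by $U^{V_s(J)}$. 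This is the Rasmussen/Ni--Wu description underlying \eqref{e:NiWu}.

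First I would record the elementary but crucial observation that multiplication in $\mathrm{CFK}^\infty(K)\otimes\mathrm{CFK}^\infty(L)\simeq\mathrm{CFK}^\infty(K\#L)$ carries $A_m^-(K)\otimes_{\F[U]}A_n^-(L)$ into $A_{m+n}^-(K\#L)$: if a generator of the first factor sits at $(i,j)$ with $i\le 0,\ j\le m$ and one of the second at $(i',j')$ with $i'\le 0,\ j'\le n$, then their product sits at $(i+i',\,j+j')$ with $i+i'\le 0$ and $j+j'\le m+n$. This yields a chain map $\phi\colon A_m^-(K)\otimes_{\F[U]}A_n^-(L)\to A_{m+n}^-(K\#L)$. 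Together with the $v$--maps and the Künneth isomorphism $B^-(K)\otimes_{\F[U]}B^-(L)\cong B^-(K\#L)$, it fits into a square that commutes already on the chain level, since both $v_{m+n}\circ\phi$ and the composite of $v_m\otimes v_n$ with the bottom isomorphism are just the inclusion of $\{i\le0,\,j\le m\}\otimes\{i\le0,\,j\le n\}$ into the $\{i\le0\}$ part of the product.

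The heart of the argument is then a diagram chase with the tensor $g_K\otimes g_L$ of the two tower generators. By the Künneth theorem over the PID $\F[U]$, $g_K\otimes g_L$ generates the free summand of $H_*\!\big(A_m^-(K)\otimes A_n^-(L)\big)$. Running it down-then-right gives
\[
(v_m\otimes v_n)(g_K\otimes g_L)=U^{V_m(K)+V_n(L)}\,(g_K'\otimes g_L'),
\]
a nonzero element equal to $U^{V_m(K)+V_n(L)}$ times the tower generator of $B^-(K\#L)$. By commutativity, $v_{m+n}\big(\phi_*(g_K\otimes g_L)\big)$ equals this same nonzero multiple. Since $v_{m+n}$ sends the torsion of $H_*(A_{m+n}^-(K\#L))$ to zero (the target is torsion-free), $\phi_*(g_K\otimes g_L)$ must have nonzero free part, say $U^{c}\,t_{m+n}$ with $c\ge 0$; applying $v_{m+n}$ then yields $U^{c+V_{m+n}(K\#L)}$ times the generator. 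Comparing the two $U$--powers forces $c+V_{m+n}(K\#L)=V_m(K)+V_n(L)$, and since $c\ge 0$ this is exactly the claimed inequality.

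The main obstacle I anticipate is bookkeeping rather than conceptual: fixing the conventions so that $v_s$ is genuinely multiplication by $U^{V_s}$ on the tower (and not by the companion $U^{H_s}$), verifying that the square commutes on the nose, and --- the real point --- extracting from commutativity the fact that $\phi_*(g_K\otimes g_L)$ cannot be purely torsion and hence meets the tower with a nonnegative $U$--power $c$. The inequality (rather than equality) arises precisely because $\phi$ need not be surjective onto the tower, so $c$ may be positive.
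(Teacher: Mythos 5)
Your proof is correct, but it takes a genuinely different route from the paper. The paper's argument is four-dimensional: following Owens--Strle, it builds an explicit negative semidefinite cobordism $W$ (inside a closed even $4$--manifold $X$) from $-S^3_{2(m+n)}(K\#L)$ to $-\bigl(S^3_{2m}(K)\#S^3_{2n}(L)\bigr)$, applies Behrens' $d$--invariant inequality for such cobordisms, and then translates back into the $V_i$'s via the Ni--Wu surgery formula \eqref{e:NiWu}; this forces separate bookkeeping in the cases $m,n>0$, $m=0<n$, and $m=n=0$, where twisted correction terms $\underline{d}$ of the $0$--surgeries enter. Your argument instead stays entirely at the chain level: the connected sum formula $\CFK(K\#L)\simeq\CFK(K)\otimes_{\F[U]}\CFK(L)$, the observation that Alexander and algebraic filtrations add so that $A^-_m(K)\otimes_{\F[U]}A^-_n(L)$ maps into $A^-_{m+n}(K\#L)$, the K\"unneth theorem over the PID $\F[U]$ identifying the tower generator of the product with $g_K\otimes g_L$, and the characterization of $V_s$ as the $U$--power by which the inclusion $A^-_s\hookrightarrow B^-$ acts on towers (the same characterization the paper itself uses in the proof of \cref{p:computenu}, so your conventions are consistent with theirs). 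The diagram chase is sound: torsion dies in $\F[U]$, so commutativity forces $V_{m+n}(K\#L)+c=V_m(K)+V_n(L)$ with $c\ge 0$, which is the inequality. What each approach buys: yours is more elementary and uniform (no case analysis, no $4$--manifold input, no appeal to Behrens or to twisted correction terms), and is essentially self-contained given the K\"unneth formula for $\CFK$; the paper's cobordism technique is heavier but sits naturally alongside its proof of \cref{l:gvsnu}, and the geometric setup (surgery cobordisms plus $d$--invariant monotonicity) generalizes to situations where one does not have a chain-level connected sum formula at hand.
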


\begin{proof}
Consider the surgery diagrams in \cref{f:owensstrleX} and \cref{f:owensstrleW}, representing a closed 4--manifold $X$ and a 4--dimensional cobordism $W$ from $-S^3_{2(m+n)}(K\#L)$ to $-(S^3_{2m}(K)\# S^3_{2n}(L))$.
One should be careful with orientation-reversals here; in particular, notice that in \cref{f:owensstrleW} we represent the cobordism $\overline W$ obtained by turning $W$ upside down.

\begin{figure}[h]
\labellist
\pinlabel $2m$ at 19 10
\pinlabel $2n$ at 279 10
\pinlabel $K$ at 18 55
\pinlabel $L$ at 280 55
\pinlabel $0$ at 150 92
\pinlabel $0$ at 248 130
\pinlabel $\cup \;4$-handle at 350 130
\endlabellist
\centering
\includegraphics[scale=.7]{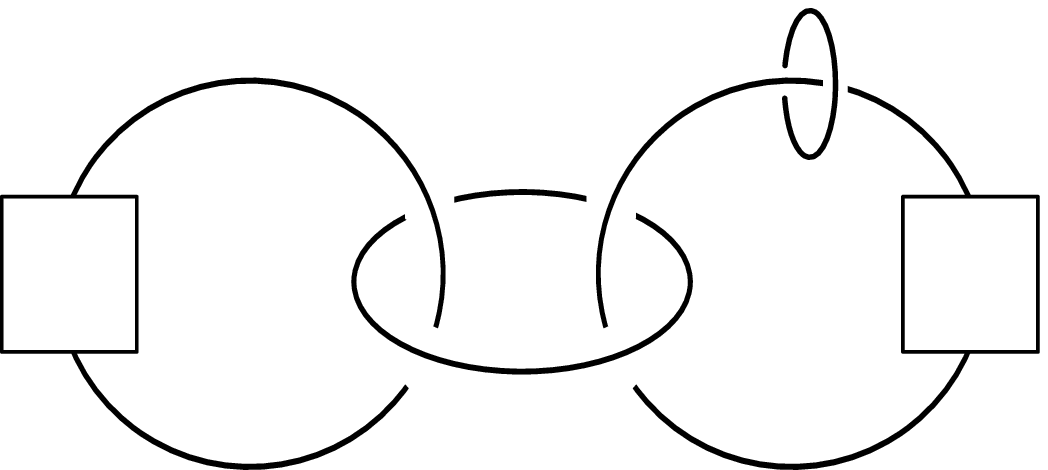}
\caption{The surgery diagram for the closed 4--manifold $X$.}\label{f:owensstrleX}
\end{figure}

\begin{figure}[h]
\labellist
\pinlabel $\langle 2m\rangle$ at 13 10
\pinlabel $\langle 2n\rangle$ at 285 10
\pinlabel $K$ at 18 55
\pinlabel $L$ at 280 55
\pinlabel $0$ at 150 92
\endlabellist
\centering
\includegraphics[scale=0.7]{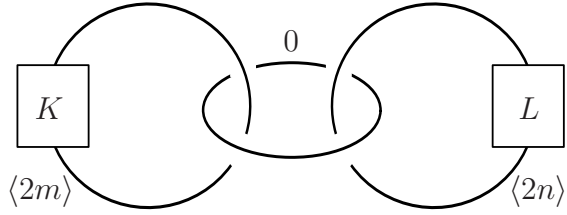}
\caption{The surgery diagram for the upside-down cobordism $\overline{W}$ from $S^3_{2m}(K)\# S^3_{2n}(L)$ to $S^3_{2(m+n)}(K\#L)$. The coefficients in brackets represent the negative boundary $\del_-W$.}\label{f:owensstrleW}
\end{figure}

As observed by Owens and Strle \cite{OwensStrle}, when $m,n>0$, $W\subset X$ is a negative definite cobordism from $S^3_{2m}(K)\# S^3_{2n}(L)$ to $S^3_{2(m+n)}(K\#L)$ with $H_2(W;\Z) = \Z$ and $\chi(W) = 1$.
When $m=0$ or $n=0$, $W$ has signature $\sigma(W)=0$; therefore, regardless of positivity of $m$ and $n$, $W$ is negative semidefinite.
Moreover, $W$ is obtained from $\del_-W$ by attaching a single $2$-handle, and this does not decrease the first Betti number of the boundary.
It follows that we are in the right setup to apply~\cite[Theorem 4.1]{Behrens}.

The 4--manifold $X$ is even; since 0 is a characteristic vector, it is the first Chern class of a \spinc structure $\fs_0$ on $X$.
The \spinc structure $\fs_0$ restricts to the \spinc structure on $W$ with trivial first Chern class, hence $c_1(\fs_0)^2=0$.

Notice also that $X\setminus W$ is the disjoint union of two 4--manifolds: one is the boundary connected sum of the surgery handlebodies for $S^3_{2m}(K)$ and $S^3_{2n}(L)$, and the other is the surgery handlebody for $S^3_{2(m+n)}(K\#L)$ with the reversed orientation.
In particular, labelling of the restriction of $\fs_0$ onto the two boundary components of $W$ is determined by the evaluation of $c_1(\fs_0)$ on the generators of the second cohomology of the two pieces \cite[Section 2.4]{OzsvathSzabo-integralsurgeries}.

With the chosen convention for labelling \spinc structures \eqref{e:spinclabel}, since $c_1(\fs_0) = 0$, $\fs_0$ restricts to the \spinc structure $\ft_m$ on $S^3_{2m}(K)$, to the \spinc structure $\ft_n$ on $S^3_{2n}(L)$, and to the \spinc structure $\ft_{m+n}$ on $S^3_{2(m+n)}(K)$.

We observe here that $\underline{d}(-S^3_{0}(\overline K),\ft_0)+\frac12b_1(-S^3_{0}(\overline{K})) = 2V_0(K)$, and the same holds for $L$ and $K\#L$ (compare with~\cite[Proposition 4.12]{OzsvathSzabo-absolutely} and~\cite[Proposition 3.8]{Behrens}).
When $m>0$, however, $\underline{d}(-S^3_{2m}(\overline K),\ft_m) = d(-S^3_{2m}(\overline K),\ft_m) = \frac14+2V_n(K)$, and analogous formulae hold for $L$ and $K\#L$.

We now apply \cite[Proposition 3.7 and Theorem 4.1]{Behrens} to $W$; we have:
\begin{gather*}
b_2^-(W) + 4d(-(S^3_{2m}(\overline K)\#S^3_{2n}(\overline L)),\ft_m\#\ft_n) + 2b_1(-(S^3_{2m}(\overline K)\#S^3_{2n}(\overline L))) \le \\ \le 4\underline{d}(-S^3_{2m}(\overline K),\ft_m)+2b_1(-S^3_{2m}(\overline K))+4\underline{d}(-S^3_{2n}(\overline L),\ft_n)+2b_1(-S^3_{2n}(\overline L)).
\end{gather*}

When $m$ and $n$ are both positive, the inequality above becomes:
\[
{\textstyle \frac14}+{\textstyle \frac14} + 2V_{m+n}(K\#L)  \le {\textstyle \frac14} + 2V_m(K) + {\textstyle \frac14} + 2V_n(L).
\]

When exactly one among $m$ and $n$ vanishes, say $m=0$, we have:
\[
{\textstyle \frac14} + 2V_{n}(K\#L)  \le  2V_0(K) + {\textstyle \frac14} + 2V_n(L),
\]

Finally, when $m=n=0$,
\[
2V_0(K\#L)  \le  2V_0(K) + 2V_0(L),
\]

In all cases, we have proved that $V_{m+n}(K\#L) \le V_m(K)+V_n(L)$, as desired.
\end{proof}

We are now ready to prove \cref{t:subadd}.

\begin{proof}[Proof of \cref{t:subadd}]
This now follows from \cref{p:subaddV} by setting $m=\nu^+(K)$ and $n=\nu^+(L)$. In fact, since $V_m(K) = V_n(L) = 0$,
\[
V_{m+n}(K\#L) \le V_m(K) + V_n(L) = 0;
\]
that is, $\nu^+(K\#L) \le m+n = \nu^+(K) + \nu^+(L)$.
\end{proof}

\section{Examples}\label{s:examples}

In this section we study a 3--parameter family of pairs of torus knots on which the lower bound given by $\nu^+$ is sharp. We first start with a 1--parameter subfamily that we study in some detail, and we then turn to the whole family.

\begin{ex}
We are going to present an example in which the bound provided by $\nu^+$ on the genus of a cobordism between torus knots is stronger than the ones given by the Tristram--Levine signature function, $\tau$, $s$ and $\Upsilon$, and moreover it is sharp.

\begin{figure}[h]
\labellist
\pinlabel $\frac{2p}{3p}$ at 28 51
\pinlabel $\frac{2p}{3p}$ at 168 51
\pinlabel $=$ at 118 51
\pinlabel $\frac ap$ at 70 87
\pinlabel $\frac{a_1}p$ at 210 87
\pinlabel $\frac{a_2}p$ at 210 51
\pinlabel $\frac{a_3}p$ at 210 15
\endlabellist
\centering
\includegraphics[scale=1]{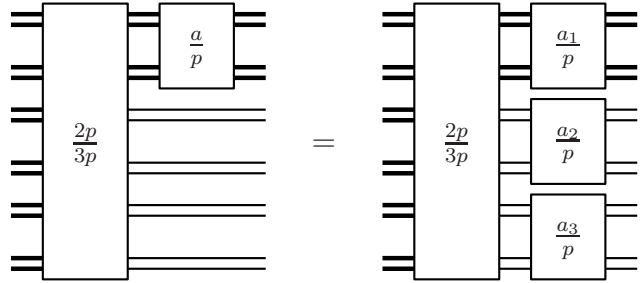}
\caption{The knot $K_{a,p}$: the boxes indicate the number of full twists. Equality holds whenever $a = a_1 + a_2 + a_3$}\label{f:cablingT32a}
\end{figure}

\begin{figure}[h]
\labellist
\pinlabel $\frac{3p}{2p}$ at 28 33
\pinlabel $\frac{3p}{2p}$ at 168 33
\pinlabel $=$ at 118 33
\pinlabel $\frac bp$ at 70 53
\pinlabel $\frac{b_1}p$ at 210 52
\pinlabel $\frac{b_2}p$ at 210 16
\endlabellist
\centering
\includegraphics[scale=1]{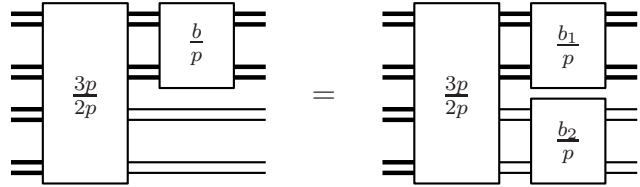}
\caption{The knot $K'_{b,p}$: the boxes indicate the number of full twists. Equality holds whenever $b = b_1 + b_2$}\label{f:cablingT32b}
\end{figure}

Define the two families of links $K_{a,p}$ and $K_{b,p}^\prime$ as the closure of the braids pictured in \cref{f:cablingT32a} and \cref{f:cablingT32b}.
Notice that $K_{a,p}$ and $K'_{b,p}$ are $(p,s)$--cables of the trefoil for some $s$, and that they are knots if and only if $\gcd(a,p)=1$ and $\gcd(b,p)=1$ respectively.
Moreover, $K_{a,p}$ is the product of $2p(3p-1) + a(p-1)$ positive generators of the braid group on $3p$ strands, hence its closure represents a transverse knot in the standard contact 3--sphere with self-linking number $6p^2 + (a-5)p-a$.
Since for closure of positive braids the self-linking number agrees with the Seifert genus, we can compute the cabling parameter $s = 6p+a$.
In conclusion, we have shown that $K_{a,p}$ is the $(p,6p+a)$--cable of $T_{2,3}$.

The same argument applies to $K'_{b,p}$, the self-linking number computation yields $6p^2 + (b-5)p-b$, hence showing that $K'_{b,p}$ is the $(p,6p+b)$--cable of $T_{2,3}$.
In particular $K_{a,p}$ and $K'_{b,p}$ are isotopic if and only if $a=b$.

Now consider the knots $K_{12,p} = K'_{12,p}$. Denote with $\sigma_i$ the $i$--th elementary generator of the braid group, and, whenever $i<j$, denote with $\sigma_{i,j}$ the product $\sigma_i\sigma_{i+1}\cdots\sigma_j$.

Setting $a_1 = a_2 = a_3 = 4$ in the right-hand side of \cref{f:cablingT32a} exhibits $K_{12,p}$ as the closure of the braid
\[
\sigma_{1,3p-1}^{2p} \cdot \overbrace{\underbrace{\sigma_{1,p-1}^4}_{\Sigma_1^4}\cdot \underbrace{\sigma_{p+1,2p-1}^4}_{\Sigma_2^4}\cdot  \underbrace{\sigma_{2p+1,3p-1}^4}_{\Sigma_3^4}}^\Pi.
\]
The three elements $\Sigma_1, \Sigma_2,\Sigma_3$ commute, hence $\Pi = (\Sigma_1\Sigma_2\Sigma_3)^4$. 
Now, notice that $\Sigma_1\sigma_p\Sigma_2\sigma_{2p}\Sigma_3 = \sigma_{1,3p-1}$. 
Since adding a generator $\sigma_i$ corresponds to a band attachment between two strands, we produce a cobordism built out of 8 bands from $K_{12,p}$ to $T_{2p+4,3p}$; if $p$ is coprime with 6, both ends of the cobordism are connected, and its genus is 4.

An analogous argument, setting $b_1 = b_2 = 6$ in the right-hand side of \cref{f:cablingT32b} produces a 6--band, genus-3 cobordism from $K'_{12,p}$ to $T_{2p,3p+6}$ whenever $p$ is coprime with 6.

Suppose now that $p\equiv 5 \pmod{6}$, $p\ge 11$. Gluing the two cobordisms above yields a genus-7 cobordism between $K=T_{2p+4,3p}$ and $L=T_{2p,3p+6}$.

Applying \cref{p:computenu} above we obtain a sharp bound on the slice genus; in fact, in the same notation as in \cref{p:computenu}, we have:
\begin{itemize}
\item $2\delta_K = 2g(K) = 6p^2+7p-3$ and $2\delta_L = 2g(L) = 6p^2+ 7p - 5$;
\item $\Gamma_K(2) = 3p$ and $\Gamma_L(2) = 3p+6$;
\item $\Gamma_K(3)=4p+8$ while $\Gamma_L(3)=4p$.
\end{itemize}

It follows that
\begin{align*}
\nu^+(K\#\overline L) &\ge 1 + \Gamma_L(2) - \Gamma_K(2) = 7;\\
\nu^+(L\#\overline K) &\ge -1 + \Gamma_K(3) - \Gamma_L(3) = 7.
\end{align*}
A direct computation using \cite[Theorem 1.3]{OzsvathSzaboStipsicz} shows that for $p=11,17,23,29$ the bound given by $\Upsilon$ is 3, the one given by the Tristram--Levine signatures is either 2 or 5, and the one given by $\tau$ and $s$ is 1.

Moreover, we need at least 7 positive and 7 negative crossing changes to turn $K$ into $L$, hence their Gordian distance is at least 14. Additionally, suppose that we have a factorisation of the cobordism above into genus-1 cobordisms, and suppose that one of these cobordisms goes from $K_1$ to $K_2$. 
Then both $\nu^+(K_2) = \nu^+(K_1)-1$ and $\nu^+(\overline K_2) = \nu^+(\overline K_1)-1$.
\end{ex}

\begin{ex}
We can promote the family above to a family parametrised by suitable triples of integers $(p,q,r)$ as follows: instead of considering the $(p,6p+12)$--cable of the trefoil $K_{12,p} = K'_{12,p}$ we can consider the $(p,qr(p+2))$--cable $K^p_{q,r}$ of $T := T_{q,r}$. The first condition we impose on the triple $(p,q,r)$ is that $q<r$ and $\gcd(q,r)=1$.

By looking at $K^p_{q,r}$ as a cable of $T$ seen as the closure of an $r$--braid, we can glue $2q\cdot (r-1)$ bands to $K^p_{q,r}$ and obtain $K = T_{q(p+2), rp}$. Call $x_1 = q(p+2), x_2 = rp$ the two generators of the semigroup $\Gamma_K$.

By looking at $K^p_{q,r}$ as a cable of $T$ seen as the closure a $q$--braid instead, we see that we can glue $2r\cdot(q-1)$ bands to $K^p_{q,r}$ and obtain $L = T_{qp, r(p+2)}$. Call $y_1 = qp, y_2 = r(p+2)$ the two generators of the semigroup $\Gamma_L$.

If $\gcd(p,2qr) = \gcd(p+2,2qr) = 1$, both $K$ and $L$ have one component, i.e. they are torus knots; e.g. both equalities hold if $p\equiv -1 \pmod{2qr}$.
Moreover, $\delta_K-\delta_L = g(K) - g(L) = r-q$, and above we produced a cobordism of genus $2qr-q-r$ between $K$ and $L$, made of $4qr-2q-2r$ bands.
Hence $\nu^+(K\#\overline L), \nu^+(L\#\overline K) \le 2qr-q-r$.

Choose $p$ sufficiently large; it is elementary to check that if $p\ge2qr-1$, for $n_1 = \delta_T+q-1$ and $n_2 = \delta_T+r-1$ we have
\begin{align*}
\Gamma_T(n_1) &= (q-1)r, &\Gamma_T(n_2) &= (r-1)q;\\
\Gamma_K(n_1) &= (q-1)x_2 = (q-1)rp,\quad &\Gamma_K(n_2) &= (r-1)x_1 = (r-1)q(p+2);\\
\Gamma_L(n_1) &= (q-1)y_2 = (q-1)r(p+2), &\Gamma_L(n_2) &= (r-1)y_1 = (r-1)qp.
\end{align*}

If we set $n=n_1$ in \cref{t:maincomputation} we obtain:
\[
\nu^+(K\#\overline L) \ge \delta_K - \delta_L + \Gamma_L(n_1)-\Gamma_K(n_1) =  2qr-q-r.
\]
Reversing the roles of $K$ and $L$ and setting $n=n_2$ yields
\[
\nu^+(L\#\overline K) \ge \delta_L - \delta_K + \Gamma_K(n_2)-\Gamma_L(n_2) = 2qr-q-r.
\]

The lower bound for the genus given by $\nu^+$ is in this case is tight, as the upper and lower bounds
match, and moreover the Gordian distance between $K$ and $L$ is at least $4qr-2q-2r$.

\end{ex}

\bibliographystyle{amsplain}
\bibliography{cobordisms}

\end{document}